\newtheorem{thm}{Theorem}[section]
\newtheorem{observation}[thm]{Observation}
\newtheorem{proposition}[thm]{Proposition}
\newtheorem{cor}[thm]{Corollary}
\newcommand{\2}{ \vspace{0.2cm} }
\newcommand{\cL}{{\cal L}}
\let\oldenumerate\enumerate
\renewcommand{\enumerate}{
  \oldenumerate
  \setlength{\itemsep}{0.5pt}
  \setlength{\parskip}{0pt}
  \setlength{\parsep}{0pt}
}
\begin{document}

\title{Minimal graphs with disjoint dominating\\ and paired-dominating sets}

\author{$^{1}$Michael A. Henning\thanks{Research
supported in part by the University of Johannesburg} \, and \, $^{2}$Jerzy Topp\\
\\
$^1$Department of Mathematics and Applied Mathematics\\
University of Johannesburg \\
Auckland Park 2006, South Africa \\
\small {\tt Email: mahenning@uj.ac.za} \\
\\
$^2$The State University of Applied Sciences in Elbląg, Poland\\
\small {\tt Email: jtopp@inf.ug.edu.pl} }

\date{} \maketitle

\begin{abstract}
\noindent A subset $D\subseteq V_G$ is a dominating set of $G$ if every vertex in $V_G-D$ has a~neighbor in $D$, while $D$ is a paired-dominating set of $G$ if $D$ is a~dominating set and the subgraph induced by $D$ contains a perfect matching. A graph $G$ is a $D\!P\!D\!P$-graph if it has a pair $(D,P)$ of disjoint sets of vertices of $G$ such that $D$ is a dominating set and $P$ is a paired-dominating set of $G$. The study of the $D\!P\!D\!P$-graphs was initiated by Southey and Henning (Cent. Eur. J. Math. 8 (2010) 459--467; J. Comb. Optim. 22 (2011) 217--234). In this paper, we provide conditions which ensure that a graph is a $D\!P\!D\!P$-graph. In particular, we characterize the minimal $D\!P\!D\!P$-graphs.\\

\noindent{\bf Keywords:}  Domination, paired-domination\\
{\bf \AmS \; Subject Classification:} 05C69, 05C85
\end{abstract}

\section{Introduction}

Let $G=(V_G,E_G)$ be a graph with vertex set $V(G) = V_G$ and edge set $E(G) = E_G$, where we allow multiple edges and loops. A set of vertices $D \subseteq V_G$ is a \emph{dominating set} of $G$ if every vertex in $V_G \setminus D$  has a~neighbor in $D$, while $D$ is \emph{$2$-dominating set} of $G$ if every vertex in $V_G \setminus D$  has at least two neighbors in $D$. A set $D \subseteq V_G$ is a~\emph{total dominating set}  of $G$ if every vertex has a~neighbor in $D$. A set $D \subseteq V_G$ is a~\emph{paired-dominating set} of $G$ if $D$ is a~dominating set and the subgraph induced by $D$ contains a perfect matching.

Ore \cite{Ore} was the first to observe that a graph with no isolated vertex contains two disjoint dominating sets. Consequently, the vertex set of a graph without isolated vertices can be partitioned into two dominating sets. Various graph theoretic properties and parameters of graphs having disjoint dominating sets are studied in \cite{AK,HHLMS,HT,HLR09,HM18,KS,LR10}. Characterizations of graphs with disjoint dominating and total dominating sets are given in \cite{HLR10+,HLR10,HLR10++,HS08,HS09,KJ,SH11a}, while in \cite{BDGHHU,DDH,DHH17,DGHH,HY} graphs which have the property that their vertex set can be partitioned into two disjoint total dominating sets are studied.  Conditions which guarantee the existence of a~dominating set whose complement contains a $2$-dominating set, a paired-dominating set or an independent dominating set are presented in~\cite{HaynesHenning2005,HLR10,HR13,KJ,KS,MTZ19,SH11b}.

In this paper we restrict our attention to conditions which ensure a partition of vertex set of a graph into a~dominating set and a paired-dominating set. The study of graphs having a dominating set whose complement is a paired-dominating set was initiated by Southey and Henning  \cite{SH10a,SH11b}. They define a \emph{$D\!P$-pair} in a~graph $G$ to be a~pair $(D,P)$ of disjoint sets of vertices of $G$ such that $V(G) = D \cup P$ where $D$ is a dominating set and $P$ is a~paired-dominating set of $G$. A graph that has a~$D\!P$-pair is called a~\emph{$D\!P\!D\!P$-graph} (standing, as in \cite{SH10a,SH11b}, for ``dominating, paired dominating, partitionable graph").  It is easy to observe that a~complete graph $K_n$ is a~$D\!P\!D\!P$-graph if $n\ge 3$ (and $K_3$ is the smallest $D\!P\!D\!P$-graph), a~path $P_n$ is a~$D\!P\!D\!P$-graph if and only if $n\in \mathbb{N} \setminus \{1, 2, 3, 5, 6, 9\}$, while a~cycle $C_n$ is a~$D\!P\!D\!P$-graph if $n\ge 3$ and $n\ne 5$. It was also proved in \cite{SH10a} that every cubic graph is a~$D\!P\!D\!P$-graph. In \cite{SH11b} the $D\!P\!D\!P$-graphs (and, in particular, the $D\!P\!D\!P$-trees) were characterized as the graphs which can be constructed from a labeled $P_4$ by applying eight (four, resp.) operations.

For notation and graph theory terminology we in general follow~\cite{ChLZ16}. Specifically, for a vertex $v$ of a~graph $G=(V_G,E_G)$, its \emph{neighbourhood\/}, denoted by $N_{G}(v)$, is the set of all vertices adjacent to $v$, and the cardinality of $N_G(v)$, denoted by $d_G(v)$, is called the \emph{degree} of~$v$. The \emph{closed neighbourhood\/} of $v$, denoted by $N_{G}[v]$, is the set $N_{G}(v)\cup \{v\}$. In general, for a subset $X\subseteq V_G$ of vertices, the \emph{neighbourhood\/} of $X$, denoted by $N_{G}(X)$, is defined to be $\bigcup_{v\in X}N_{G}(v)$, and the \emph{closed\/} neighborhood of $X$, denoted by $N_{G}[X]$, is the set $N_{G}(X)\cup X$. The minimum degree of a vertex in $G$ is denoted by $\delta(G)$.  A vertex of degree one is called a \emph{leaf}, and the only neighbor of a~leaf is called its \emph{support vertex} (or simply, its \emph{support}). If a support vertex has at least two leaves as neighbors,  we call it a \emph{strong\/} support, otherwise it is a \emph{weak} support. The set of leaves, the set of weak supports, the set of strong supports, and the set of all supports of $G$ is denoted by $L_G$, $S'_G$, $S''_G$,  and $S_G$, respectively. If $v$ is a~vertex of $G$, then by $E_G(v)$ and $\cL_G(v)$ we denote the set of edges and the set of loops incident with $v$ in $G$, respectively.

We denote the \emph{path}, \emph{cycle}, and \emph{complete graph} on $n$ vertices by $P_n$, $C_n$, and $K_n$, respectively. The \emph{complete bipartite graph} with one partite set of size~$n$ and the other of size~$m$ is denoted by $K_{n,m}$. A \emph{star} is the tree $K_{1,k}$ for some $k \ge 1$. For $r, s \ge 1$, a \emph{double star} $S(r,s)$ is the tree with exactly two vertices that are not leaves, one of which has $r$ leaf neighbors and the other $s$ leaf neighbors. We define a \emph{pendant edge} of a graph to be an edge incident with a vertex of degree~$1$. We use the standard notation $[k] = \{1,\ldots,k\}$.

\section{$2$-Subdivision graphs of a graph}

Let $H=(V_H,E_H)$ be a graph with no isolated vertices and with possible multi-edges and multi-loops. By $\varphi_H$ we denote a function from $E_H$ to $2^{V_H}$ that associates with each $e\in E_H$ the set $\varphi_H(e)$ of vertices incident with $e$. Let $X_2$ be a set of $2$-element subsets of an arbitrary set (disjoint with $V_H \cup E_H$), and let $\xi  \colon E_H \to X_2$ be a function such that $\xi(e)\cap \xi(f) = \emptyset$ if $e$ and $f$ are distinct elements of $E_H$. If $e\in E_H$ and $\varphi_H(e)=\{u,v\}$ ($\varphi_H(e)=\{v\}$, respectively), then we write $\xi(e)= \{u_e,v_e\}$ ($\xi(e)= \{v^1_e,v^2_e\}$, respectively). If $\alpha \colon L_H\to \mathbb{N}$ is a function, then let $\Phi_\alpha \colon L_H \to L_H\times \mathbb{N}$ be a~function such that $\Phi_\alpha(v)=\{(v,i)\colon i \in [ \alpha(v)] \}$ for $v\in L_H$.

Now we say that a graph $S_2(H)= (V_{S_2(H)},E_{S_2(H)})$ is the $2$-\emph{subdivision graph} of $H$ (with respect to the functions $\xi\colon E_H\to X_2$ and $\alpha \colon L_H \to \mathbb{N}$), if $V_{S_2(H)}=V_{S_2(H)}^o\cup V_{S_2(H)}^n$, where
\[
V_{S_2(H)}^o = (V_H \setminus L_H)\cup \bigcup_{v\in L_H}\Phi_\alpha(v) \hspace*{0.5cm} \mbox{and} \hspace*{0.5cm} V_{S_2(H)}^n =  \bigcup_{e\in E_H}\xi(e),
\]
and where
\[
\begin{array}{lcl}
E_{S_2(H)} & = & \displaystyle{ \bigcup_{e\in E_H} \{xy\colon
\xi(e)=\{x,y\}\} \, \cup } \2 \\
& & \hspace*{0.3cm} \displaystyle{  \bigcup_{v\in V_H \setminus L_H} \left(\{vv_e \colon e \in E_H(v)\} \cup  \{vv^1_e, vv^2_e \colon e\in \cL_H(v)\} \right) \, \cup } \2 \\
& & \hspace*{1.5cm} \displaystyle{  \bigcup_{v\in L_H}\{v_e(v,i) \colon e\in E_H(v),\,\, i \in [\alpha(v)]\}.}
\end{array}
\]

\section{Main Results}

In this paper, our aim is to characterize $D\!P\!D\!P$-graphs. The following result provided a characterization of minimal $D\!P\!D\!P$-graphs, where a good subgraph is defined in Section~\ref{S:good}.


\begin{thm}
\label{thm:main1} If $G$ is a connected graph of order at least three, then the following statements are equivalent:
\begin{enumerate}
\item[$(1)$] $G$ a minimal $D\!P\!D\!P$-graph.
\item[$(2)$] $G=S_2(H)$ for some connected graph $H$, and either $(V_{S_2(H)}^o,V_{S_2(H)}^n)$ is the unique $D\!P$-pair in $G$ or $G$ is a cycle of length 3, 6 or 9.
\item[$(3)$] $G=S_2(H)$ for some connected graph $H$ that has neither an isolated vertex nor a good subgraph.
\item[$(4)$] $G=S_2(H)$ for some connected graph $H$ and no proper spanning subgraph of $G$ without isolated vertices is a $2$-subdivided graph.
\end{enumerate}
\end{thm}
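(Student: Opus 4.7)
The plan is to prove the cyclic chain $(1)\Rightarrow(4)\Rightarrow(3)\Rightarrow(2)\Rightarrow(1)$, and the backbone of the whole argument is a preliminary correspondence lemma: \emph{a graph $G$ without isolated vertices is a $D\!P\!D\!P$-graph if and only if it contains a spanning subgraph without isolated vertices of the form $S_2(H)$ for some graph $H$, and under this bijection every $D\!P$-pair $(D,P)$ of $G$ arises as $(V_{S_2(H)}^o,V_{S_2(H)}^n)$ for a suitable such spanning $S_2(H)$.} The nontrivial direction starts from a $D\!P$-pair $(D,P)$ and a perfect matching $M$ of $G[P]$, sets $E_H:=M$, and for each matched pair $\{p,q\}\in M$ chooses one attachment edge from $p$ to $D$ and one from $q$ to $D$; the remaining vertices of $D$ are realized as leaf copies via $\Phi_\alpha$, while coincidences of attachment endpoints are recorded as multi-edges or loops of $H$. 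Verifying that the attachments can be chosen so that every vertex of $D$ is covered, using only that $P$ dominates $D$, is the main technical point.

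With the correspondence lemma in hand, $(1)\Rightarrow(4)$ is immediate: a spanning $S_2(H)\subseteq G$ without isolated vertices exists, and minimality of $G$ forces $G=S_2(H)$; any proper $2$-subdivided spanning subgraph without isolated vertices would itself be a $D\!P\!D\!P$-graph, contradicting minimality. For $(4)\Rightarrow(3)$ I would argue contrapositively: $H$ has no isolated vertex because any such would survive as an isolated vertex of $G=S_2(H)$, and the definition of a good subgraph of $H$ (Section~\ref{S:good}) is set up precisely so that its presence produces a proper $2$-subdivided spanning subgraph of $G$ without isolated vertices. For $(3)\Rightarrow(2)$, under the stated hypothesis the correspondence lemma realizes each $D\!P$-pair of $G=S_2(H)$ as $(V_{S_2(H'')}^o,V_{S_2(H'')}^n)$ for some spanning $S_2(H'')\subseteq G$; unless $(D,P)=(V_{S_2(H)}^o,V_{S_2(H)}^n)$, the graph $H''$ embeds in $H$ as a good subgraph---except when a global rotational symmetry forces $G$ to be $C_3$, $C_6$, or $C_9$, and these three cycles must be tracked as genuine exceptions. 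Finally, $(2)\Rightarrow(1)$ uses that every $S_2(H)$ is a $D\!P\!D\!P$-graph via $(V_{S_2(H)}^o,V_{S_2(H)}^n)$; if that pair is unique, deleting any edge destroys the canonical pair (matching edges break the perfect matching inside $V^n$; attachment edges break the domination of a degree-two subdivision vertex or of a degree-one leaf copy) and no alternative pair exists, while for $C_3,C_6,C_9$ direct inspection using the fact that $P_3,P_6,P_9$ are not $D\!P\!D\!P$-graphs (recorded in the introduction) gives minimality.

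The hardest step is the correspondence lemma, and specifically the simultaneous choice of attachment edges that covers every vertex of $D$; I expect this to reduce to a case analysis distinguishing the vertices of $D$ that are dominated by only a single matched pair of $M$ (which must become leaf copies or force the attachment for that pair) from those dominated by several (which leave freedom in the choice). A secondary subtlety is the systematic handling of $C_3$, $C_6$, $C_9$: these are the only minimal $D\!P\!D\!P$-graphs where the $D\!P$-pair fails to be unique, so they appear as exceptions each time the bijection between $D\!P$-pairs and spanning $2$-subdivision subgraphs is invoked.
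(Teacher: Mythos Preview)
Your correspondence lemma is correct, and in fact admits a cleaner proof than the case analysis you sketch: given a $D\!P$-pair $(D,P)$ with perfect matching $M$ in $G[P]$, take an edge-minimal spanning subgraph $B'$ of the $D$--$P$ bipartite graph subject to $\delta(B')\ge 1$; minimality forces every edge $dp$ of $B'$ to satisfy $\deg_{B'}(d)=1$ or $\deg_{B'}(p)=1$, which is precisely the structural constraint needed for $M\cup E(B')$ to be the edge set of a $2$-subdivision graph with $(V^o,V^n)=(D,P)$.

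The genuine gap is in your step $(3)\Rightarrow(2)$. You assert that if $(D',P')\ne (V^o_{S_2(H)},V^n_{S_2(H)})$ then the graph $H''$ produced by the correspondence lemma ``embeds in $H$ as a good subgraph.'' This is not right: $H''$ is not a subgraph of $H$ at all---its vertices typically lie partly inside $V^n_{S_2(H)}$, and its edges correspond to $M$-edges of $P'$, not to edges of $H$. What you actually need is the implication \emph{a proper spanning $2$-subdivided subgraph of $S_2(H)$ forces a good subgraph in $H$}, and this is the technical heart of the entire theorem. The paper carries it out in the second half of Theorem~\ref{S2(H) is minimal DPDP-graph}: using Observation~\ref{pierwsze-wlasnosci-S2(H)}(6), each component of the proper spanning $S_2(H')$ that acquires a new leaf inside $V^n_{S_2(H)}$ is shown to be an $S_2(P_{k+1})$ with a distinguished support vertex $v^i\in V_H$; the set $\{v^i\}$ together with the $H$-edges whose middle edges were deleted forms the good subgraph $Q$, while the longest paths in those components, read back in $H$, supply the oriented family~$\mathcal P$. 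None of this is captured by ``embed $H''$ in $H$,'' and without it your cycle of implications does not close.

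For comparison, the paper does not use your correspondence lemma. It takes $(1)$ as a hub and proves $(1)\Leftrightarrow(2)$, $(1)\Leftrightarrow(3)$, $(1)\Leftrightarrow(4)$ separately, with the key preliminary being the weaker Theorem~\ref{observ-2-minimal-DPDP-graphs}: in a \emph{minimal} $D\!P\!D\!P$-graph every $D\!P$-pair already has $D$ independent, $G[P]$ a perfect matching, and each $p\in P$ with at most one non-leaf $D$-neighbor, which directly exhibits the $S_2$-structure. Your correspondence lemma is a pleasant strengthening that streamlines $(1)\Rightarrow(4)$ and the non-cycle half of $(3)\Rightarrow(2)$, but it does not let you bypass the hard construction of the good subgraph from a proper spanning $2$-subdivision; both approaches must pass through that argument.
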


\section{Properties of $2$-subdivision graphs}

We remark that $2$-subdivision graphs are defined only for graphs without isolated vertices and, intuitively, $S_2(H)$ is the graph obtained from $H$ by inserting two new vertices into each edge and each loop of H, and then replacing each pendant edge $v_ev$ by pendant edges $v_e(v,1), \ldots, v_e(v,\alpha(v))$. In particular, it follows from this definition that every tree of diameter three (i.e., every double star) is a $2$-subdivision graph of $K_2$. Moreover, a path $P_n$ (of order $n$) is a $2$-subdivision graph (of a path) if and only if $n=3k+1$ for every positive integer $k$ and here $\alpha$ assign to each leaf the value~$1$. Fig. \ref{2-subdivision-graph} shows a~graph $H$ and a possible $2$-subdivision graph $S_2(H)$ of $H$ where here $\alpha \colon L_H \to \{3\}$.

\begin{figure}[h!]\begin{center}  \special{em:linewidth 0.4pt} \unitlength 0.4ex \linethickness{0.4pt} \begin{picture}(220,55)\put(30,10){\path(0,30)(0,0)(30,30) \path(30,0)(60,0)\bezier{200}(0,0)(15,-12)(30,0)\bezier{200}(0,0)(15,12)(30,0)
\path(0,30)(30,30)\put(-4.0,-3){${}^{s}$}\put(-4,27){${}^{r}$}\put(29.5,30){$^t$}\put(28.95,-5.5){$^u$}
\put(-4,12){$^a$}\put(12,13){$^c$}\put(14,29){$^b$}\put(14,5.0){$^d$}\put(14,-7.0){$^e$}
\put(43.0,-6.0){$^f$}\put(58.5,-6.0){$^v$}\put(47,27){$^g$}\put(29.0,9.5){$^h$}
\put(30,30){\bezier{200}(0,0)(19,-12)(20,0)\bezier{200}(0,0)(19,12)(20,0)}
\put(30,30){\bezier{200}(0,0)(-12,-19)(0,-20)\bezier{200}(0,0)(12,-19)(0,-20)}
\multiput(0,0)(30,0){3}{\circle*{2}}\multiput(0,30)(30,0){2}{\circle*{2}}\put(-7,37){$^H$}}
\put(110,10){\path(0,30)(0,0)(30,30)\path(30,0)(60,0)\path(0,30)(30,30)
\bezier{200}(0,0)(15,-12)(30,0)\bezier{200}(0,0)(15,12)(30,0)
\put(30,30){\bezier{200}(0,0)(19,-12)(20,0)\bezier{200}(0,0)(19,12)(20,0)}
\put(30,30){\bezier{200}(0,0)(-12,-19)(0,-20)\bezier{200}(0,0)(12,-19)(0,-20)}
\multiput(0,0)(30,0){2}{\circle*{2}}\multiput(0,30)(30,0){2}{\circle*{2}}
\multiput(0,10)(0,10){2}{\whiten\circle{2}}\put(-4.0,-3){${}^{s}$}\put(-4,27){${}^{r}$} \put(29.5,30){$^t$}\put(28.95,-5.5){$^u$}\put(-5.5,7){$^{s_a}$}\put(-5.5,17){$^{r_a}$} \put(8.5,30.5){$^{r_b}$}\put(18.5,30.5){$^{t_b}$}\put(4.5,8){$^{s_c}$}\put(15.0,18.0){$^{t_c}$}
\put(8.0,-0.5){$^{s_d}$}\put(18,-0.5){$^{u_d}$}\put(8.0,-5){$^{s_e}$}\put(18,-5){$^{u_e}$}
\put(25.5,12.0){$^{t^1_h}$}\put(37.5,12.0){$^{t^2_h}$}\put(37.5,-6){$^{u_f}$}\put(47.5,-6){$^{v_f}$}
\put(43.5,16.5){$^{t^1_g}$}\put(43.5,37.5){$^{t^2_g}$}\put(60,-9){$^{(v,1)}$}\put(62,-3.5){$^{(v,2)}$}
\put(60,2.5){$^{(v,3)}$}\multiput(10,10)(10,10){2}{\whiten\circle{2}}
\multiput(10,-5.4)(10,0){2}{\whiten\circle{2}}\multiput(10,5.4)(10,0){2}{\whiten\circle{2}}
\multiput(10,30)(10,0){2}{\whiten\circle{2}}\multiput(45,24)(0,12){2}{\whiten\circle{2}}
\multiput(24,15)(12,0){2}{\whiten\circle{2}}\path(58,4)(50,0)(58,-4)
\multiput(60,0)(0,0){1}{\circle*{2}}\multiput(58,-4)(0,8){2}{\circle*{2}}
\multiput(40,0)(10,0){2}{\whiten\circle{2}}\put(-7,37){$^{S_2(H)}$}}
\end{picture}\caption{A $2$-subdivision graph $S_2(H)$ of a graph $H$} \label{2-subdivision-graph}\end{center}\end{figure}

\begin{observation} \label{pierwsze-wlasnosci-S2(H)}
Let $H$ be a graph with no isolated vertex, and let $G=S_2(H)$ be the $2$-subdivision graph of $H$ $($with respect to functions $\xi\colon E_H\to X_2$ and $\alpha \colon L_H \to \mathbb{N}$$)$. Then the following statements hold.
\begin{enumerate}
\item[$(1)$] $d_{G}(v)= d_H(v)$ if $v\in V_H \setminus L_H$, and $d_{G}((v,i))= 1$ if $v\in L_H$ and $i \in [\alpha(v)]$.
\item[$(2)$] $d_G(x)=2$ if $x\in V_{S_2(H)}^n \setminus S_G$, and $d_G(v_e)=1+\alpha(v)$ if $v\in L_H$ and $e\in E_H(v)$.
\item[$(3)$] If $x, y\in V_G \setminus V_{S_2(H)}^n$ are distinct, and belong to the same component of $G$, then either $d_G(x,y) \equiv 0 \,(\hspace{-1.5ex}\mod 3)$ or $x,y\in L_G$ and $d_G(x,y)=2$.
\item[$(4)$] If $x\in S_G$, then $|N_G(x)\cap V_{S_2(H)}^n| = 1$ and $N_G(x) \setminus V_{S_2(H)}^n \subseteq L_G$.
\item[$(5)$] If $x \in V_G$, then the following hold. \\ [-20pt]
\begin{enumerate}
\item[{\rm (a)}] If $d_G(x)>2$, then either $x\in V_H$ or $x\in S_G$ and $|N_G(x) \setminus L_G|=1$.
\item[{\rm (b)}]  If $x\in V_{S_2(H)}^n$, then either $d_G(x)=2$ or $d_G(x)>2$ and $x\in S_G$.
\item[{\rm (c)}] If $x\in V_{S_2(H)}^n$ and $d_G(x)>2$, then $x\in S_G$.
\end{enumerate}
\item[$(6)$] Let $G'$ be a $2$-subdivision graph which is a spanning subgraph of $G$. If $F$ is a~component of $G'$, then $F$ has exactly one of the following properties. \\ [-20pt]
\begin{enumerate}
\item[{\rm (a)}] $F$ is an induced subgraph of $G$ if no leaf of $F$ is in $V_{S_2(H)}^n$.
\item[{\rm (b)}] $F$ is a $2$-subdivision graph of a~path $P_{k+1}$ $($$k\ge 1$$)$ and $F$ has at most one strong support vertex if at least one leaf of $F$ is in $V_{S_2(H)}^n$. In addition, exactly one of these support vertices is in $V_H$. Moreover, if $F$ has a strong support vertex, then this strong support vertex is in $V_H$.
\end{enumerate}
\end{enumerate}
\end{observation}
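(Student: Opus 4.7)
The plan is to treat (1)--(5) as direct bookkeeping from the three clauses defining $E_{S_2(H)}$, and to reserve the real work for (6).

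For (1) and (2), I would count incidences for each vertex type. A non-leaf $v\in V_H$ accumulates one incidence $vv_e$ per $e\in E_H(v)$ and two incidences $vv_e^1,vv_e^2$ per loop at $v$, matching $d_H(v)$; each new leaf $(v,i)$ has the single incidence $v_e(v,i)$. A vertex in $\xi(e)$ picks up the in-$\xi(e)$ edge plus either the edge $vv_e$ (if $v\notin L_H$) or the $\alpha(v)$ pendant edges $v_e(v,i)$ (if $v\in L_H$), giving the dichotomy in (2). Statements (4) and (5) then follow by assembly: by (1) the leaves of $G$ are exactly $\{(v,i):v\in L_H,\,i\in[\alpha(v)]\}$, whence $S_G=\{v_e:v\in L_H,\,e\in E_H(v)\}$, each such support has the unique ``inner'' neighbor $u_e\in V_{S_2(H)}^n$ and otherwise only leaf neighbors, and the vertices of degree $>2$ are precisely the non-leaves of $H$ together with the strong supports $v_e$ having $\alpha(v)\ge 2$.

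For (3), every edge of $G$ produced by the three clauses has at least one endpoint in $V_{S_2(H)}^n$, so $V_{S_2(H)}^o$ is independent; a shortest path between two old vertices therefore alternates in and out of new vertices. A non-loop edge $e=\{u,v\}$ of $H$ contributes a length-$3$ segment $u\!-\!u_e\!-\!v_e\!-\!v$ (or a leaf-end analogue $(v,i)\!-\!v_e\!-\!u_e\!-\!u$), a loop at $v$ contributes a length-$3$ cycle through $v$, and the only shortcut between two leaves $(v,i),(v,j)$ sharing support $v_e$ is the length-$2$ walk through $v_e$. This exhausts the local options and yields the congruence.

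Part (6) is the substantive step. Writing $G'=S_2(H')$, I would first note that by (1) applied to $G'$ the leaves of $G'$ are the new leaves of its own $H'$-subdivision. For (6a), assume no leaf of $F$ lies in $V_{S_2(H)}^n$ and suppose for a contradiction that some edge $xy\in E_G$ between vertices of $V_F$ is absent from $E_{G'}$; tracking which of the three clauses of $E_{S_2(H)}$ produced $xy$, and which clause of $E_{S_2(H')}$ would have to re-produce it, forces one of $x,y$ to be a vertex of $V_{S_2(H)}^n$ serving as a leaf of $G'$ inside $F$, against the hypothesis. For (6b), pick a leaf $\ell\in V_{S_2(H)}^n$ of $F$; by (5)(b) the vertex $\ell$ has $G$-degree at least $2$ but $G'$-degree $1$, so some $G$-edge at $\ell$ is deleted in passing to $G'$. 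Working outward from $\ell$ and applying the mod-$3$ rule (3) simultaneously inside $G$ and inside $G'$, any branching of $F$ would force an ``old'' vertex of $G$ to play a ``new'' role in $G'$ or produce a forbidden residue, ruling this out; hence $F$ must be a $2$-subdivision of some path $P_{k+1}$. The assertions about support vertices then follow from (4) applied inside $F$ together with a local check at each endpoint. The main obstacle I expect is this last step: correctly aligning the $\xi$-slots of $G$ with those of $G'$ at the two ends of $F$ to pin down that exactly one support of $F$ lies in $V_H$ and that any strong support of $F$ must lie in $V_H$.
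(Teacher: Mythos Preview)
Your plan for (1)--(5) matches the paper's, which dismisses these as immediate consequences of the definition. Your contradiction argument for (6a) is valid and essentially equivalent to the paper's direct verification that $F=S_2(H[V_F\cap V_H])$; the paper builds outward from a vertex of $V_F\cap V_H$, showing that each loop or edge of $H$ at such a vertex forces its subdivision vertices and all three subdivision edges into $F$ (else $G'$ acquires a component of order $\le 2$). Your version needs one extra sentence for the case where the removed edge is $u_ev_e$ with $v\in L_H$ and $\alpha(v)\ge 2$: then $v_e$ does not become a leaf, but its $G'$-component is a star, which is not a $2$-subdivision graph.

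For the path-structure part of (6b) your ``mod-$3$ simultaneously in $G$ and $G'$'' idea is exactly the paper's: it applies (5)(a) to $F=S_2(H')$ to see that any vertex of degree $\ge 3$ in $F$ lies in $V_{H'}$ or in $S_F$, then uses the distance-from-$x_0$ congruence to conclude $V_{H'}\setminus L_{H'}\subseteq V_{S_2(H)}^n$, whence such vertices have $G$-degree $2$ and cannot branch.

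The one place your plan is thinner than the paper is the support-vertex clause you flag as the main obstacle. Invoking (4) inside $F$ only tells you about the $H'$-structure of $F$, not about membership in $V_H$. The paper instead works in $G$: each support $s_i$ of $F$ has $d_G(s_i)\ge d_F(s_i)\ge 2$, so if $s_i$ is strong then by (5)(a) applied to $G$ either $s_i\in V_H$ or $s_i\in S_G$. It then runs a three-case analysis on the pair $(s_1,s_2)$, applying (3) in $G$ to the distance between suitable leaf neighbors $\ell_1,\ell_2$ to force a residue contradiction in $F$ (via (3) applied to $F$) or to force both supports into $S_G$, contradicting $L_F\cap V_{S_2(H)}^n\ne\emptyset$. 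This is a global distance argument rather than a local endpoint check, so you should plan to replace that step accordingly.
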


\begin{proof} The statements (1)--(5) are immediate consequences of the definition of the $2$-subdivision graph. To prove (6), let $G'$ be a spanning subgraph of $G$ that is a~$2$-subdivision graph and let $F$ be a component of $G'$. Since $G'$ is a $2$-subdivision graph, so too is the graph $F$, i.e., $F=S_2(H')$ for some connected graph $H'$ (and some functions $\xi'\colon E_{H'}\to X_2$ and $\alpha' \colon L_{H'} \to \mathbb{N}$).

\medskip
\emph{Case 1. $L_F \cap V_{S_2(H)}^n = \emptyset$.}  Since $F$ is a $2$-subdivision graph, the sets $V_{F}\cap V_H$ and $V_{F}\cap V_{S_2(H)}^n$ are nonempty. Assume first that $v\in V_{F}\cap V_H$ and $e$ is a loop at $v$ in $G$. We claim that the vertices $v_e^1$ and $v_e^2$, and the edges $vv_e^1$, $v_e^1v_e^2$, $vv_e^2$ belong to $F$. If $v_e^1$ or $v_e^2$ were not in $F$, then $G'$ (which is a spanning subgraph of $G$) would have a component of order one or two, which is impossible in a $2$-subdivision graph. Now, since neither $v_e^1$ nor $v_e^2$ is a leaf in $F$, both $v_e^1$ and $v_e^2$ are of degree~$2$ in $F$ and this proves that the edges $vv_e^1$, $v_e^1v_e^2$, $vv_e^2$ belong to $F$. We can similarly show that if $u, v\in V_{F}\cap V_H$ and $e$ is an edge joining $u$ to $v$ in $G$, then the vertices $u_e$ and $v_e$, and the edges $vv_e$, $v_eu_e$, $u_eu$ belong to $F$. From this it follows that $F$ is a $2$-subdivision graph of the induced subgraph $H[V_{F}\cap V_H]$ and, therefore, $F$ is an induced subgraph of $G$.

\medskip
\emph{Case 2. $L_F \cap V_{S_2(H)}^n \ne \emptyset$.} Let $x_0$ be a leaf of $F$ which belongs to $V_{S_2(H)}^n$ in $G$. Since $G'$ is a $2$-subdivision graph, the vertex $x_0$ does not belong to $N_G[S_G]$. In addition, if $\Delta(F)\le 2$, then $F$ is a path, and, since $F$ is a $2$-subdivision graph, we note that $F=P_{3k+1}=S_2(P_{k+1})$ (for some positive integer $k$), as desired. Thus assume that $\Delta(F)\ge 3$. Let $x$ be a vertex of degree at least 3 in $F$. It follows from (5) applied to the graph $F=S_2(H')$ that either $x\in V_{H'}$ or $x\in S_F$ and $|N_F(x) \setminus L_F|=1$. However, such a vertex $x$ cannot be in $V_{H'} =\{y\in V_F \colon d_F(x_0,y)\equiv 0 \,(\hspace{-1.5ex} \mod 3)\}$, as every vertex belonging to $V_{H'} \setminus L_{H'} \subseteq V_{S_2(H)}^n$ is of degree~$2$ in $G$ and in $F$, while vertices in $F$ corresponding to elements of $L_{H'}$ are of degree~$1$. This proves that $x\in S_F$ and $|N_F(x) \setminus L_F|=1$, that is, every vertex of degree at least~$3$ in $F$ is a strong support vertex and it has only one neighbor which is not a leaf. From this it follows that $F$ is a $2$-subdivision graph $S_2(P_{k+1})$ with at least one strong support vertex (for some positive integer $k$).

It remains to show that $F$ cannot have two strong support vertices. Suppose, for the sake of contradiction, that $s_1$ and $s_2$ are distinct strong support vertices in $F$. Let $\ell_1$ and $\ell_2$ be leaves in $F$ adjacent to $s_1$ and $s_2$, respectively. Since $s_1$ and $s_2$ are vertices of degree at least three in $G=S_2(H)$, it follows from (5) that each of them belongs to $V_H$ or $S_G$. There are three cases to consider. If $s_1, s_2 \in V_H$, then it follows from (3) that $d_G(s_1,s_2) \equiv 0 \,(\hspace{-1.5ex} \mod 3)$, implying that $d_F(\ell_1,\ell_2) \equiv 2 \,(\hspace{-1.5ex} \mod 3)$ and $d_F(\ell_1,\ell_2) \ne 2$, contradicting (3) in $F$. Hence renaming $s_1$ and $s_2$ if necessary, we may assume that $s_2 \in S_G$. If $s_1\in V_H$, then it follows from (3) that $d_G(s_1,\ell_2) \equiv 0 \,(\hspace{-1.5ex} \mod 3)$, implying that $d_F(\ell_1,\ell_2) \equiv 1 \,(\hspace{-1.5ex} \mod 3)$, contradiction to (3) in $F$. Hence, $s_1 \in S_G$. Thus, no leaf of $F$ belongs to $V_{S_2(H)}^n$ in $G$,  contradicting our choice of $F$. This completes the proof of the statement~(6).
\end{proof}

We next present the following elementary property of a $D\!P\!D\!P$-graph.

\begin{observation}
\label{observ-1}
If $(D,P)$ is a $D\!P$-pair in a graph $G$, then every leaf of $G$ belongs to $D$, while every support of $G$ is in $P$, that is, $L_G\subseteq D$ and $S_G\subseteq P$.
\end{observation}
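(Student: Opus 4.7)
The plan is to prove the two containments separately, both by a short contradiction / direct argument exploiting the uniqueness of the neighbor of a leaf.

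First I would show $L_G \subseteq D$. Let $\ell$ be a leaf of $G$ with unique neighbor $s$ (its support vertex). Suppose for contradiction that $\ell \notin D$, so $\ell \in P$. Since $P$ induces a subgraph with a perfect matching, $\ell$ must be matched to one of its neighbors in $G[P]$. But $\ell$ has only the neighbor $s$, so $s \in P$ and $\ell$ is matched to $s$. Then the only neighbor of $\ell$ lies in $P$, not in $D$, contradicting the assumption that $D$ is a dominating set (since $\ell \in V_G \setminus D$ must then have a neighbor in $D$). Hence $\ell \in D$, giving $L_G \subseteq D$.

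Next I would show $S_G \subseteq P$. Let $s \in S_G$ with some leaf neighbor $\ell$. By the first part, $\ell \in D$, so $\ell \notin P$. Since $P$ is a paired-dominating set, it is in particular a dominating set, so $\ell$ must have a neighbor in $P$. The only neighbor of $\ell$ is $s$, and therefore $s \in P$. This yields $S_G \subseteq P$.

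There is no real obstacle here: the whole argument rests on the fact that a leaf has exactly one neighbor, which forces the matching partner in $P$ to be the support vertex and simultaneously forces the only potential dominator to lie outside $D$. The only subtlety worth flagging is to apply the correct definition at each step — the \emph{matching} property of $P$ in the first paragraph, and the \emph{domination} property of $P$ in the second — which together rule out the bad placements of leaves and supports.
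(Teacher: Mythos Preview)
Your proof is correct and is exactly the natural argument one would expect; the paper itself states this observation without proof, treating it as an elementary consequence of the definitions, so there is nothing further to compare.
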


A connected graph $G$ is said to be a \emph{minimal $D\!P\!D\!P$-graph}, if $G$ is a~$D\!P\!D\!P$-graph and no proper spanning subgraph of $G$ is a~$D\!P\!D\!P$-graph.

We remark that a complete graph $K_n$ is a minimal $D\!P\!D\!P$-graph only if $n=3$. We observe that a path $P_n$ is a minimal $D\!P\!D\!P$-graph if and only if $n\in \{4, 7, 10, 13\}$, while a cycle $C_n$ is a minimal $D\!P\!D\!P$-graph if and only if $n\in \{3, 6, 9\}$. From the definition of a~minimal $D\!P\!D\!P$-graph we immediately have the following important (and intuitively easy) observation.

\begin{observation}
\label{observ-2-supergraph}
Every spanning supergraph of a~$D\!P\!D\!P$-graph is a $D\!P\!D\!P$-graph, and, trivially, every $D\!P\!D\!P$-graph is a spanning supergraph of some~minimal $D\!P\!D\!P$-graph.
\end{observation}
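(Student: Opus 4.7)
The plan is to handle the two assertions of the observation separately, each by a direct and essentially routine argument; no structural results about $2$-subdivision graphs or minimal $D\!P\!D\!P$-graphs are needed.

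For the first assertion, I would fix a $D\!P$-pair $(D,P)$ of $G$ and a spanning supergraph $G'$ of $G$, so that $V_{G'}=V_G$ and $E_G\subseteq E_{G'}$. The disjoint partition $V_{G'}=D\cup P$ is inherited verbatim from $G$. Since $N_G(x)\subseteq N_{G'}(x)$ for every vertex $x$, adding edges can only enlarge neighbourhoods, and so both $D$ and $P$ remain dominating sets in $G'$. Moreover, any perfect matching $M$ of $G[P]$ witnessing that $P$ is paired-dominating in $G$ is still a set of edges of $G'$ lying inside $G'[P]$ that saturates every vertex of $P$, so $G'[P]$ also contains a perfect matching. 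Hence $(D,P)$ is a $D\!P$-pair of $G'$ and $G'$ is a $D\!P\!D\!P$-graph.

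For the second assertion, I would use a finite descent on the number of edges. Given a $D\!P\!D\!P$-graph $G$, the family $\mathcal{F}$ of spanning subgraphs of $G$ that are $D\!P\!D\!P$-graphs contains $G$, hence is nonempty, and is obviously finite. Choose $G^{\ast}\in\mathcal{F}$ with $|E_{G^{\ast}}|$ as small as possible. By choice $G^{\ast}$ is a $D\!P\!D\!P$-graph, and no proper spanning subgraph of $G^{\ast}$ is a $D\!P\!D\!P$-graph, since any such subgraph would also belong to $\mathcal{F}$ and have strictly fewer edges, contradicting the minimality of $G^{\ast}$. Thus $G^{\ast}$ is a minimal $D\!P\!D\!P$-graph whose spanning supergraph is $G$. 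I do not expect a real obstacle; the only subtlety is that the formal definition of a minimal $D\!P\!D\!P$-graph requires connectedness, which I would secure either by appealing to the standing connectedness hypothesis on the graphs under discussion or, if necessary, by running the descent step on each connected component of $G$ separately.
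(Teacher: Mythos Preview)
Your argument is correct and is precisely the routine verification the paper has in mind; indeed the paper offers no proof at all, stating only that the observation follows ``immediately'' from the definition of a minimal $D\!P\!D\!P$-graph. Your awareness of the connectedness clause in that definition is well placed: the paper itself later applies the observation (and Theorem~\ref{observ-2-minimal-DPDP-graphs}) to spanning subgraphs that need not be connected, so the intended reading is evidently componentwise, exactly as you suggest.
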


We show next that the $2$-subdivision graph of an isolate-free graph is a $D\!P\!D\!P$-graph.

\begin{proposition}
\label{prop-S2(H)-jest-DPDP-grafem}
If a graph $H$ has no isolated vertex, then its $2$-subdivision graph $S_2(H)$ is a $D\!P\!D\!P$-graph.
\end{proposition}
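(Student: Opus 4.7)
The plan is to show that the natural partition $(V_{S_2(H)}^o, V_{S_2(H)}^n)$ into old vertices and new subdivision vertices is itself a $DP$-pair of $G = S_2(H)$. This is the candidate that is flagged in the statement of Theorem~\ref{thm:main1}(2), so verifying it here gives the existence half of that equivalence.

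First I would check that $V_{S_2(H)}^n$ is a paired-dominating set. For the matching, note that for every $e \in E_H$ the defining edge set of $G$ contains the edge $xy$ with $\xi(e) = \{x,y\}$; because the sets $\xi(e)$ are pairwise disjoint and together exhaust $V_{S_2(H)}^n$, these edges form a perfect matching on $V_{S_2(H)}^n$. For domination, one must verify that every vertex of $V_{S_2(H)}^o$ has a neighbor in $V_{S_2(H)}^n$. If $v \in V_H \setminus L_H$, then since $H$ has no isolated vertex there exists $e \in E_H(v) \cup \cL_H(v)$, and then $v$ is adjacent in $G$ to $v_e$ (or to $v_e^1$). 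If $(v,i) \in \Phi_\alpha(v)$ for some $v \in L_H$, then $(v,i)$ is by construction adjacent to $v_e$, where $e$ is the unique edge of $H$ incident with $v$.

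Next I would check that $V_{S_2(H)}^o$ is a dominating set, i.e.\ that every new vertex has a neighbor in $V_{S_2(H)}^o$. Take $x \in V_{S_2(H)}^n$, so $x \in \xi(e)$ for a unique $e \in E_H$. If $\varphi_H(e) = \{u,v\}$ with $u,v \in V_H \setminus L_H$, the edge set of $G$ puts $x$ adjacent to whichever of $u,v$ it corresponds to, and this endpoint lies in $V_{S_2(H)}^o$. If one of the endpoints, say $v$, is a leaf of $H$ and $x = v_e$, then $x$ is adjacent in $G$ to the $\alpha(v) \ge 1$ vertices $(v,i) \in \Phi_\alpha(v) \subseteq V_{S_2(H)}^o$. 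If $e$ is a loop at a non-leaf vertex $v$, then both elements of $\xi(e) = \{v_e^1, v_e^2\}$ are adjacent to $v \in V_{S_2(H)}^o$. In every case $x$ has a neighbor in $V_{S_2(H)}^o$.

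Since $V_{S_2(H)}^o \cap V_{S_2(H)}^n = \emptyset$ by construction, the pair $(V_{S_2(H)}^o, V_{S_2(H)}^n)$ is a $DP$-pair, and $S_2(H)$ is a $DPDP$-graph. There is no real obstacle here: the argument is a direct unpacking of the definition of $S_2(H)$, and the hypothesis that $H$ has no isolated vertex is used precisely once, to guarantee that each non-leaf old vertex of $H$ is incident with at least one edge or loop and hence is dominated by a subdivision vertex.
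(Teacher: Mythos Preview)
Your proof is correct and follows essentially the same approach as the paper: you take the natural partition $(D,P)=(V_{S_2(H)}^o,V_{S_2(H)}^n)$ and verify directly from the definition of $S_2(H)$ that $D$ dominates, that $P$ dominates, and that the edges $\{xy : \xi(e)=\{x,y\},\ e\in E_H\}$ give a perfect matching on $P$. The case analysis you give is slightly more explicit than the paper's, but the argument is the same.
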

\begin{proof}
Let $S_2(H)$ be the subdivision graph of $H$ (with respect to functions $\xi\colon E_H\to X_2$ and $\alpha \colon L_H \to \mathbb{N}$). We shall prove that $(D,P)$ is a $D\!P$-pair in $S_2(H)$, where
\[
D= V_{S_2(H)}^o = (V_H \setminus L_H)\cup \bigcup_{v\in L_H}\Phi_\alpha(v)
\]
and $P = V_{S_2(H)}^n = V_{S_2(H)} \setminus D$. If $x \in P$, then $x\in \xi(e)$ for some $e\in E_H$, and $x$ is adjacent in $S_2(H)$ to a vertex incident with $e$ in $H$. This proves that $D$ is a~dominating set of $S_2(H)$. Assume now that $y \in D$. If $y\in V_H \setminus L_H$, then, since $H$ has no isolated vertex, there is an edge $f$ incident with $y$ in $H$, and therefore $y$ is adjacent to $y_f \in P$ (or to $y^1_f \in P$ and $y^2_f \in P$ if $f$ is a loop) in $S_2(H)$. If $y\in \Phi_\alpha(v)$ for some $v\in L_H$, then $y$ is adjacent to $v_e$ in $S_2(H)$, where $e$ is the only pendant edge incident with $v$ in $H$. Consequently, $P$ is a dominating set of $S_2(H)$. In addition, since the two vertices of $\xi(e)$ are adjacent in $S_2(H)$ for every $e \in E_H$, the set $P= \bigcup_{e\in E_H}\xi(e)$ is  a~paired-dominating set of $S_2(H)$.
This proves that $S_2(H)$ is a~$D\!P\!D\!P$-graph.
\end{proof}

Since every graph is homeomorphic to its $2$-subdivision graph, it follows from Proposition~\ref{prop-S2(H)-jest-DPDP-grafem} that every graph without isolated vertices is homeomorphic to a~$D\!P\!D\!P$-graph.  Consequently, the structure of  $D\!P\!D\!P$-graphs becomes more complex.

The next theorem presents general properties of $D\!P$-pairs in a minimal $D\!P\!D\!P$-graph.

\begin{thm} \label{observ-2-minimal-DPDP-graphs}
If $G$ is a minimal $D\!P\!D\!P$-graph and $(D,P)$ is a $D\!P$-pair in $G$, then the following four statements hold. \begin{enumerate}
\item[$(1)$] $D$ is a~maximal independent set in $G$.
\item[$(2)$] The induced graph $G[P]$ consists of independent edges, that is, $\delta(G[P])=\Delta(G[P])=1$.
\item[$(3)$] If $x\in P$, then $|N_G(x) \setminus P|=1$ or $N_G(x) \setminus P$ is a~nonempty subset of $L_G$.
\item[$(4)$] $G$ is a $2$-subdivision graph of some graph $H$.
\end{enumerate}
\end{thm}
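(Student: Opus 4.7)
The plan is to prove all four items by exploiting the minimality of $G$: whenever a claimed structural condition fails, one can delete a single edge of $G$ without destroying the $DP$-pair $(D,P)$, contradicting minimality. The fourth item then converts the shape imposed by (1)--(3) into an explicit graph $H$ with $G=S_2(H)$.

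For (1), independence comes from the edge-removal trick: if $uv$ were an edge inside $D$, then $G-uv$ still admits $(D,P)$ as a $DP$-pair, since $D$ continues to dominate $P$ (the deleted edge touches no vertex of $P$) and the paired-domination of $P$ is untouched. Minimality is contradicted, so $D$ is independent; maximality is then immediate because $D$ already dominates $V_G\setminus D$, so no vertex of $P$ can be appended to $D$ while keeping independence. For (2), the paired-domination of $P$ gives a perfect matching $M$ in $G[P]$, so $\delta(G[P])\ge 1$; if some edge $xy$ of $G[P]$ were not in $M$, deleting $xy$ would again preserve $(D,P)$ as a $DP$-pair of the resulting graph (domination between $D$ and $P$ is unaffected and $M$ survives in $G[P]-xy$), contradicting minimality.

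For (3), suppose $x\in P$ has two distinct neighbours $y,z\in D$ with $y\notin L_G$. By (1) every neighbour of $y$ lies in $P$, and since $d_G(y)\ge 2$ there is $w\in P\setminus\{x\}$ adjacent to $y$. Delete the edge $xy$: the vertex $x$ remains dominated by $z$, the vertex $y$ remains dominated from $P$ through $w$, and the perfect matching of $G[P]$ is intact, so $(D,P)$ is still a $DP$-pair of $G-xy$, again contradicting minimality. Hence whenever $|N_G(x)\setminus P|\ge 2$, all of those neighbours must be leaves.

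For (4), I would read off a graph $H$ directly from $(D,P)$. Let $V_H$ consist of every non-leaf vertex of $D$, designated as $V_H\setminus L_H$, together with, for each $x\in P$ whose $D$-neighbourhood lies entirely in $L_G$, a new symbol $v_x\in L_H$ with $\alpha(v_x)=|N_G(x)\cap D|$. For each matching pair $\{x,x'\}$ of $G[P]$ introduce one edge $e$ of $H$ whose two endpoints are dictated by the dichotomy of (3): on the $x$-side use the unique vertex of $N_G(x)\cap D$ when it is a non-leaf of $G$, and the symbol $v_x$ otherwise (symmetrically for $x'$); this produces a loop exactly when both sides point to the same non-leaf. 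Setting $\xi(e)=\{x,x'\}$ and identifying each copy $(v_x,i)$ with one of the $|N_G(x)\cap D|$ leaves in $N_G(x)\cap D$, a direct comparison with the definition of $S_2(H)$ yields $G=S_2(H)$. The main obstacle is the bookkeeping here: one has to verify that distinct matching pairs give disjoint $\xi$-images, that each leaf of $G$ is captured exactly once as some $(v_x,i)$ (which relies on Observation~\ref{observ-1}, placing every leaf in $D$ and every support in $P$), that every non-leaf vertex of $D$ receives precisely the edges incident to it in $H$, and that $H$ has no isolated vertex. Once the vertex bijection $V_G\leftrightarrow V_{S_2(H)}$ is fixed, equality of edge sets is mechanical from (1), (2) and the dichotomy in (3).
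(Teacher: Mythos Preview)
Your proposal is correct and follows essentially the same route as the paper: parts (1)--(3) are proved by the identical edge-deletion arguments, and part (4) builds $H$ from the non-leaf vertices of $D$ together with one new leaf-symbol per support vertex of $G$, with edges of $H$ indexed by the matching pairs in $G[P]$. The only cosmetic difference is that the paper first passes through an auxiliary graph $G^*$ (collapsing all leaves at a given support into a single leaf) before reading off $H$, whereas you construct $H$ directly; the content is the same. One tiny point worth making explicit in (3): the nonemptiness of $N_G(x)\setminus P$ follows from $D$ dominating $x$.
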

\begin{proof} (1) If $D$ is not an independent set, then $D$ contains two vertices, say $x$ and $y$, that are adjacent. In this case, $(D,P)$ would be a~$D\!P$-pair in $G-xy$,  contradicting the minimality of $G$. Hence, the set $D$ is both an independent and dominating set of $G$, implying that $D$ is a maximal independent set in $G$.

(2) Since $P$ is a paired-dominating set of $G$, by definition, $G[P]$ has a perfect matching, say $M$. If $xy$ is an edge of $G[P]$ which is not in $M$, then $(D,P)$ would be a~$D\!P$-pair in $G-xy$, violating the minimality of $G$. Hence, the edges of $M$ are the only edges of $G[P]$.

(3) Assume that $x \in P$. It follows from (2) that $x$ has exactly one neighbor in $P$, say $x'$. Thus since $(D,P)$ is a $D\!P$-pair in $G$, we note that $N_G(x) \setminus \{x'\}$ is a~nonempty subset of the dominating set $D$ of $G$. If every neighbor of $x$ in $D$ is a leaf, then $N_G(x) \setminus P$ is a nonempty subset of $L_G$. Hence we may assume that $x$ contains a neighbor $y$ in $D$ that is not a leaf, for otherwise the desired result follows. If $x$ contains a neighbor in $D$ different from $y$, then $x$ is dominated by a vertex belonging to $D \setminus \{y\}$ and $y$ is dominated by some vertex in $P \setminus \{x\}$, implying that $(D,P)$ is a~$D\!P$-pair in $G-xy$, contradicting the minimality of $G$. Hence in this case, the vertex $y$ is the only neighbor of $x$ in $D$, and so $N_G(x) = \{x',y\}$ and $|N_G(x) \setminus P| = 1$.

(4) Let $G$ be a~minimal $D\!P\!D\!P$-graph, and let $(D,P)$ be a~$D\!P$-pair in $G$. For a~support vertex $s$, the set of leaves adjacent to $s$ is denoted by $L_G(s)$, i.e., $L_G(s) = N_G(s)\cap L_G$. Let $G^*$ denote the graph resulting from $G$ by replacing the vertices of $L_G(s)$ by a new vertex $v_s$ and joining $v_s$ to $s$, for every $s\in S_G$, i.e., $G^*=(V_{G^*},E_{G^*})$, where $V_{G^*}= (V_G \setminus L_G) \cup \{v_s \colon s\in S_G\}$ and $E_{G^*}= E_{G-L_G}\cup \{sv_s \colon s\in S_G\}$. By (2) and (3) above, we note that every vertex of $P$ has degree~$2$ in $G^*$. Further, every vertex of $P$ has exactly one neighbor in $P$.

We define a graph $H = (V_H,E_H,\varphi_H)$ as follows. Let $V_H = V_{G^*} \setminus P$. For every edge $v_1v_2$ in $G^*$ that joins two vertices of $P$ we do the following. If $v_1$ and $v_2$ have a~common neighbor, say $v$, in $G^*$, then in $H$ we add a loop in $H$ at the vertex~$v$. If $v_1$ and $v_2$ do not have a common neighbor in $G^*$, then we add the edge $u_1u_2$ to $H$ where $u_1$ is the neighbor of $v_1$ different from $v_2$ and where $u_2$ is the neighbor of $v_2$ different from $v_1$ (and so $u_1v_1v_2u_2$ is a path in $G^*$). We let $\varphi_H \colon E_H \to 2^{V_H}$ be the function such that $\varphi_H(m)= N_{G^*}(m) \setminus \{m\}$ if $m\in E_H$. Now let $\xi\colon E_H \to 2^{V_H}$ and $\alpha \colon L_H \to \mathbb{N}$ be functions such that $\xi(e)=e$ if $e\in E_H$, and $\alpha(v_s)= |L_G(s)|$ if $v_s\in L_H$. With these definitions, the graph $G$ is isomorphic to the $2$-subdivision graph $S_2(H)$ of $H$ (with respect to functions $\xi\colon E_H\to 2^{V_H}$ and $\alpha \colon L_H \to \mathbb{N}$). That means that we can restore the graph $G$ by applying the operation $S_2$ to the graph $H$. \end{proof}

By Theorem~\ref{observ-2-minimal-DPDP-graphs}\,(4) every minimal $D\!P\!D\!P$-graph is a $2$-subdivision graph of some graph. The converse, however, is not true in general. For example, if $H$ is the underlying graph of any of the graphs in Fig.~\ref{xxx-graphs}, then its $2$-subdivision graph $S_2(H)$ is a $D\!P\!D\!P$-graph, but it is not a minimal $D\!P\!D\!P$-graph. The following result, which is a special case of Theorem~\ref{S2(H) is minimal DPDP-graph} proven later in the paper, will be useful to establish which $2$-subdivision graphs are not minimal $D\!P\!D\!P$-graphs.

\begin{proposition}
\label{prop:adjdeg2} Let $x$ and $y$ be adjacent vertices of degree 2 in a graph $H$ without isolated vertices, and let $x'$ and $y'$ be the vertices such that $N_H(x) \setminus \{y\}=\{x'\}$ and $N_H(y) \setminus \{x\}=\{y'\}$, respectively. If the sets $N_H(x') \setminus \{x,y\}$ and $N_H(y') \setminus \{x,y\}$ are nonempty, then the $2$-subdivision graph $S_2(H)$ is a $D\!P\!D\!P$-graph but not a minimal $D\!P\!D\!P$-graph.
\end{proposition}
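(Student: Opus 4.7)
The plan is to first invoke Proposition~\ref{prop-S2(H)-jest-DPDP-grafem} to conclude that $G := S_2(H)$ is a $D\!P\!D\!P$-graph, and then to show non-minimality by exhibiting an edge $e^*$ whose deletion still leaves a $D\!P\!D\!P$-graph; I would take $e^*$ to be the middle edge of the path that replaces $xy$ under the subdivision. Write $\xi(xx')=\{\alpha,\beta\}$ with $\alpha\sim x$ and $\beta\sim x'$, $\xi(xy)=\{\gamma,\delta\}$ with $\gamma\sim x$ and $\delta\sim y$, and $\xi(yy')=\{\varepsilon,\zeta\}$ with $\varepsilon\sim y$ and $\zeta\sim y'$; locally $G$ contains the induced path on $x',\beta,\alpha,x,\gamma,\delta,y,\varepsilon,\zeta,y'$ (in that order), and I set $e^*=\gamma\delta$. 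The key idea is to construct an alternative $D\!P$-pair
\[
D' = \bigl(V^o_{S_2(H)}\setminus\{x,y\}\bigr) \cup \{\beta,\gamma,\delta,\zeta\}, \qquad P' = V_{S_2(H)}\setminus D',
\]
so that both endpoints of $e^*$ lie in $D'$ and $e^*$ contributes neither to domination nor to the matching.

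The verification that $(D',P')$ is a $D\!P$-pair of $G-e^*$ has three parts. For the matching I would pair $x$ with $\alpha$, $y$ with $\varepsilon$, and for each $e\in E_H\setminus\{xx',xy,yy'\}$ match the two vertices of $\xi(e)$ with each other; a short count gives $|P'|=2(|E_H|-1)$, and the proposed pairing covers every vertex of $P'$ exactly once. For ``$D'$ dominates $P'$'': $\gamma$ dominates $x$, $\delta$ dominates $y$, $\beta$ dominates $\alpha$, $\zeta$ dominates $\varepsilon$, and any other vertex of $P'\cap V^n_{S_2(H)}$ keeps its standard neighbour in $V^o_{S_2(H)}\setminus\{x,y\}\subseteq D'$. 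For ``$P'$ dominates $D'$'': the only vertices of $D'$ whose original $P$-dominators ($\beta$ for $x'$, $\zeta$ for $y'$) were moved into $D'$ are $x'$ and $y'$. The hypothesis $N_H(x')\setminus\{x,y\}\neq\emptyset$ provides some $u\in N_H(x')\setminus\{x,y\}$, and because the edge $x'u$ is different from $xx'$, $xy$ and $yy'$, its subdivision vertex at $x'$ lies in $V^n_{S_2(H)}\setminus\{\beta,\gamma,\delta,\zeta\}\subseteq P'$ and dominates $x'$; a symmetric argument using $N_H(y')\setminus\{x,y\}\neq\emptyset$ handles $y'$.

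The main obstacle is precisely this last step, since one must check that the ``new'' dominator of $x'$ furnished by the hypothesis cannot accidentally coincide with any of $\beta,\gamma,\delta,\zeta$. These four vertices are subdivision vertices of exactly the three specific edges $xx'$, $xy$ and $yy'$, so any further edge at $x'$ (which the hypothesis supplies) produces a subdivision vertex outside this set; the corner cases $x'=y'$ or $N_H(x')$ containing a leaf of $H$ should be inspected explicitly but cause no trouble. Once this is settled, $(D',P')$ is a $D\!P$-pair of the proper spanning subgraph $G-e^*$, so $G-e^*$ is a $D\!P\!D\!P$-graph and $G$ is therefore not minimal, while Proposition~\ref{prop-S2(H)-jest-DPDP-grafem} already gives that $G$ itself is a $D\!P\!D\!P$-graph.
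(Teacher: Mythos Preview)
Your proof is correct and follows essentially the same approach as the paper's: you construct the identical modified pair $(D',P')$ (in the paper's notation $D'=(D\setminus\{x,y\})\cup\{x_{xy},y_{xy},x'_{xx'},y'_{yy'}\}$, which is exactly your $(V^o\setminus\{x,y\})\cup\{\beta,\gamma,\delta,\zeta\}$). The only cosmetic difference is that the paper deletes three edges ($\gamma\delta$, $x'\beta$, and $y'\zeta$ in your notation) rather than just $e^*=\gamma\delta$; since the two extra edges $x'\beta$ and $y'\zeta$ join vertices that both lie in $D'$, their presence or absence is immaterial to the $D\!P$-pair property, and either version yields a proper spanning $D\!P\!D\!P$-subgraph and hence non-minimality.
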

\begin{proof}
It follows from Proposition~\ref{prop-S2(H)-jest-DPDP-grafem} that $S_2(H)$ is a~$D\!P\!D\!P$-graph and $(D,P)$ is a~$D\!P$-pair in $S_2(H)$, where  $D=V_{S_2(H)}^o$ and $P=V_{S_2(H)}^n$. The pair $(D',P')$, where $D'= (D \setminus \{x,y\})\cup \{x_{xy}, y_{xy}, x'_{xx'}, y'_{yy'}\}$ and $P'= (P \setminus \{x'_{xx'}, y'_{yy'}\})\cup \{x,y\}$ is a $D\!P$-pair in the proper spanning subgraph $S_2(H) \setminus \{x_{xy}y_{xy}, x'x'_{xx'}, y'y'_{yy'}\}$ of $S_2(H)$. Thus, $S_2(H)$ is a $D\!P\!D\!P$-graph but not a minimal $D\!P\!D\!P$-graph.
\end{proof}

As a consequence of Proposition~\ref{prop:adjdeg2}, we can readily determine the minimal $D\!P\!D\!P$-paths and minimal $D\!P\!D\!P$-cycles.

\begin{cor}
\label{c:paths_cycles}
The following holds.
\\ [-20pt]
\begin{enumerate}
\item[{\rm (a)}] If $P_n$ is a path of order $n$, then $S_2(P_n)$ is a $D\!P\!D\!P$-graph for every $n\ge 2$, and $S_2(P_n)$ is a minimal  $D\!P\!D\!P$-graph if and only if $n\in \{2, 3, 4, 5\}$.
\item[{\rm (b)}]  If $C_{m}$ is a cycle of size~$m$, then $S_2(C_m)$ is a $D\!P\!D\!P$-graph for every positive integer $m$, and $S_2(C_m)$ is a minimal $D\!P\!D\!P$-graph if and only if $m \in [3]$.
\end{enumerate}
\end{cor}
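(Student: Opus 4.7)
The plan is to treat (a) and (b) in parallel, splitting each into a ``$DPDP$'' half and a ``minimality'' half. The $DPDP$ half is immediate from Proposition~\ref{prop-S2(H)-jest-DPDP-grafem}: both $P_n$ for $n\ge 2$ and $C_m$ for $m\ge 1$ are isolate-free, so $S_2(P_n)$ and $S_2(C_m)$ are $DPDP$-graphs throughout the stated ranges.

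For non-minimality in the large regime, I would apply Proposition~\ref{prop:adjdeg2}. Writing $P_n=v_1v_2\cdots v_n$ with $n\ge 6$, the adjacent degree-$2$ pair $x=v_3,\,y=v_4$ satisfies $v_1\in N_{P_n}(v_2)\setminus\{v_3,v_4\}$ and $v_6\in N_{P_n}(v_5)\setminus\{v_3,v_4\}$, so $S_2(P_n)$ is not minimal. For $C_m=v_1v_2\cdots v_mv_1$ with $m\ge 4$, the adjacent pair $x=v_1,\,y=v_2$ satisfies $v_{m-1}\in N_{C_m}(v_m)\setminus\{v_1,v_2\}$ (using $m\ge 4$ to guarantee $v_{m-1}\ne v_2$) and $v_4\in N_{C_m}(v_3)\setminus\{v_1,v_2\}$, so Proposition~\ref{prop:adjdeg2} gives non-minimality of $S_2(C_m)$.

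It remains to verify minimality in the small cases. Part (b) is essentially free: from the definition one checks that $S_2(C_1)\cong C_3$ (the single loop subdivides to a triangle), $S_2(C_2)\cong C_6$ (two parallel edges subdivide to a hexagon), and $S_2(C_3)\cong C_9$, and each of $C_3,C_6,C_9$ is a minimal $DPDP$-graph by the classification of minimal $DPDP$-cycles recalled in the Introduction. For Part (a) with $n\in\{2,3,4,5\}$, Proposition~\ref{prop:adjdeg2} does not apply (for $n\le 3$ there is no adjacent pair of degree-$2$ vertices in $P_n$, and for $n\in\{4,5\}$ every such pair has at least one endpoint whose ``outer'' neighbor is a leaf, killing the nonemptiness hypothesis), so minimality is checked by direct structural analysis. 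Observation~\ref{observ-1} forces every leaf of $S_2(P_n)$ into $D$ and every support into $P$; each such support has a unique non-leaf neighbor in $S_2(P_n)$, which is therefore its forced matching partner in $P$; propagating these constraints along the ``spine'' of $S_2(P_n)$ determines the $DP$-pair essentially uniquely. A finite case check then shows that removing any edge of $S_2(P_n)$ either isolates a leaf or leaves a component that is a star $K_{1,r}$ ($r\ge 1$) or a path of length at most two, none of which admits a $DP$-pair.

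The main obstacle I expect is that $S_2(P_n)$ depends on the leaf-labeling function $\alpha\colon L_{P_n}\to\mathbb{N}$, so each of the four small values of $n$ actually parameterizes an infinite family (double stars for $n=2$, caterpillars with two leaf-tufts for $n\in\{3,4,5\}$). However, because every support has a unique non-leaf neighbor regardless of how many leaves are attached to it, the forced matching partner in $P$ is independent of $\alpha$, and the edge-deletion case analysis reduces to a bounded list of ``spine-edge'' types plus the pendant edges; the argument therefore goes through uniformly in $\alpha$.
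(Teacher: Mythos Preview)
Your proposal is correct and follows exactly the route the paper intends: the corollary is stated immediately after Proposition~\ref{prop:adjdeg2} with no proof, the text merely saying that as a consequence of that proposition one ``can readily determine the minimal $D\!P\!D\!P$-paths and minimal $D\!P\!D\!P$-cycles.'' Your decomposition into (i) $D\!P\!D\!P$-ness via Proposition~\ref{prop-S2(H)-jest-DPDP-grafem}, (ii) non-minimality for $n\ge 6$ and $m\ge 4$ via Proposition~\ref{prop:adjdeg2}, and (iii) direct verification of the finitely many small cases (invoking the observation preceding Observation~\ref{observ-2-supergraph} for cycles, and the forced-propagation argument from Observation~\ref{observ-1} for paths) is precisely what the paper leaves implicit. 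Your final paragraph handling the dependence on $\alpha$ is a genuine point the paper glosses over, and your reduction to the fixed spine structure is the right way to make the argument uniform.
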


\section{Good subgraphs of a graph}
\label{S:good}

In this section, we define a good subgraph of a graph. Let $Q$ be a subgraph without isolated vertices of a graph $H$, and let $E_Q^-$ denote the set of edges belonging to $E_H \setminus E_Q$ that are incident with at least one vertex of $Q$. Let $E$ be a set such that $E_Q^-\subseteq E \subseteq E_H \setminus E_Q$, and let $A_E$ is a set of arcs obtained by assigning an orientation for each edge in $E$. Then by  $H(A_E)$ we denote the partially oriented graph obtained from $H$ by replacing the edges in $E$ by the arcs belonging to $A_E$. If $e\in E$, then by $e_A$ we denote the only arc in $A_H$ that corresponds to $e$. By $H_0$ we denote the subgraph of $H(A_E)$ induced by the vertices that are not the initial vertex of an arc belonging to $A_E$, i.e., by the set $\{v\in V_H \colon d^+_{H(A_E)}(v) = 0\}$.

We say that $Q$ is a~\emph{good subgraph of $H$} if there exist a set of edges $E$ (where $E_Q^-\subseteq E \subseteq E_H \setminus E_Q$) and a set of arcs $A_E$ such that in the resulting graph $H(A_E)$, which we simply denote by $H$ for notational convenience, the arcs in $A_E$ form a family ${\cal P}= \{P_x \colon x\in V_Q\}$ of oriented paths indexed by the vertices of $Q$ and such that the following holds.
\\ [-20pt]
\begin{enumerate}
\item[{\rm (1)}] Every vertex of $Q$ is an initial vertex of exactly one path belonging to ${\cal P}$. For each vertex $v \in Q$, we denote the (unique) path belonging to ${\cal P}$ that begins at $v$ by $P_v$. Thus, if $v \in V_Q$, then $d^+_H(v)=1$ and $d^-_H(v)= d_H(v)-d_Q(v)-1$.
\item[{\rm (2)}]  If $x$ is an inner vertex of a path $P_v\in {\cal P}$, then $d^+_H(x)=1$ and $d^-_H(x)=d_H(x)-1$.
\item[{\rm (3)}] If $x$ is a end vertex of a path $P_v\in {\cal P}$, then $d^-_H(x)<d_H(x)$.
\end{enumerate}

Examples of good subgraphs in small graphs are presented in Fig. \ref{xxx-graphs}. For clarity, the edges of a good subgraph $Q$ are drawn in bold, the arcs belonging to oriented paths are thin (and their orientations are represented by arrows), and all other edges, if any, belong to the subgraph $H_0$, are thin and without arrows.

We remark that not every graph has a good subgraph (see also Observation \ref{observation-warm-tree-in-tree} and Corollary \ref{corollary-warm-forest-in-tree}). On the other hand, if $Q$ is a graph with no isolated vertex, and $H$ is the graph obtained from $Q$ by attaching one pendant edge to each vertex of $Q$ and then subdividing this edge, then $Q$ is a good subgraph in $H$, implying that every graph without isolated vertices can be a good subgraph of some graph.

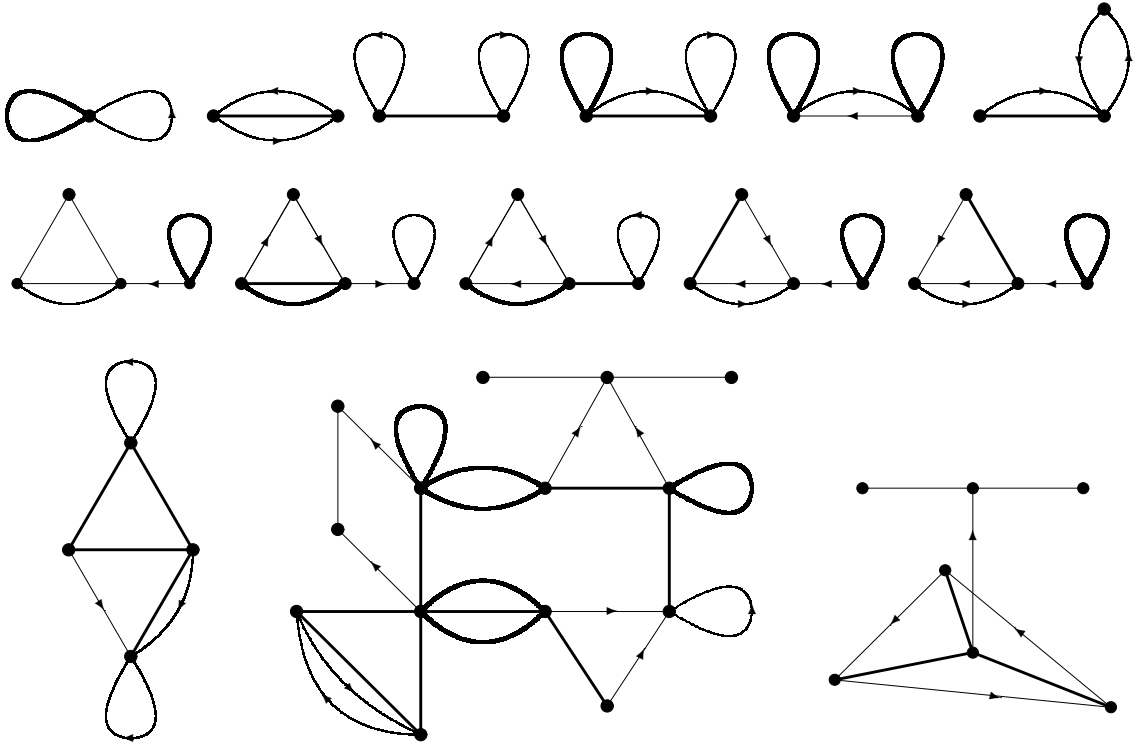
\begin{figure}[h!]\begin{center}  \special{em:linewidth 0.4pt} \unitlength 0.3ex \linethickness{0.4pt} \begin{picture}(220,100)
\put(15,60){\put(-20,10){{\linethickness{0.35mm}\bezier{300}(0,0)(-19,-12)(-20,0)
\bezier{300}(0,0)(-19,12)(-20,0)}{\bezier{300}(0,0)(19,-12)(20,0)\bezier{300}(0,0)(19,12)(20,0)
\put(20,2){\vector(0,1){0.0}}}\put(0,0){\circle*{3}}}%
\put(10,10){{\Thicklines\path(0,0)(30,0)}\bezier{300}(0,0)(15,12)(30,0)\bezier{300}(0,0)(15,-12)(30,0)
\put(13,6){\vector(-1,0){0.0}}\put(17,-6){\vector(1,0){0.0}}\multiput(0,0)(30,0){2}{\circle*{3}}}%
\put(50,10){{\Thicklines\path(0,0)(30,0)}{\bezier{300}(0,0)(-12,19)(0,20) \bezier{300}(0,0)(12,19)(0,20)\bezier{300}(30,0)(18,19)(30,20)\bezier{300}(30,0)(42,19)(30,20)
\put(-1.8,19.8){\vector(-1,0){0.0}}\put(31.8,19.8){\vector(1,0){0.0}}}
\multiput(0,0)(30,0){2}{\circle*{3}}}%
\put(100,10){{\Thicklines\path(0,0)(30,0)\linethickness{0.35mm}
\bezier{300}(0,0)(-12,19)(0,20)\bezier{300}(0,0)(12,19)(0,20)}%
{\bezier{300}(0,0)(15,12)(30,0)\bezier{300}(30,0)(18,19)(30,20) \bezier{300}(30,0)(42,19)(30,20)\put(17,6){\vector(1,0){0.0}}
\put(31.8,19.8){\vector(1,0){0.0}}}\multiput(0,0)(30,0){2}{\circle*{3}}}%
\put(150,10){{\linethickness{0.35mm}\bezier{300}(30,0)(18,19)(30,20)
\bezier{300}(30,0)(42,19)(30,20)\bezier{300}(0,0)(-12,19)(0,20)
\bezier{300}(0,0)(12,19)(0,20)}{\path(0,0)(30,0)\bezier{300}(0,0)(15,12)(30,0)
\put(17,6){\vector(1,0){0.0}}\put(13,0){\vector(-1,0){0.0}}}
\multiput(0,0)(30,0){2}{\circle*{3}}}%
\put(195,10){{\Thicklines\path(0,0)(30,0)}{\bezier{300}(0,0)(15,12)(30,0)
\bezier{300}(30,0)(18,15)(30,26)\bezier{300}(30,0)(42,15)(30,26)
\put(17,6){\vector(1,0){0.0}}\put(36,16){\vector(0,1){0.0}}\put(24,12){\vector(0,-1){0.0}}}
\put(30,26){\circle*{3}}\multiput(0,0)(30,0){2}{\circle*{3}}}}%
%
\special{em:linewidth 0.4pt} \unitlength 0.25ex \linethickness{0.4pt} \begin{picture}(220,40)\put(27,25){\put(-60,10){\path(0,0)(30,0)\path(0,0)(15,26) \path(15,26)(30,0){\path(30,0)(50,0)\put(38,0){\vector(-1,0){0.0}}}
\bezier{300}(0,0)(15,-12)(30,0){\linethickness{0.35mm}\bezier{300}(50,0)(38,19)(50,20)
\bezier{300}(50,0)(62,19)(50,20)}\multiput(0,0)(30,0){2}{\circle*{3}}\put(50,0){\circle*{3}}
\put(15,26){\circle*{3.5}}}
\put(5,10){\path(0,0)(15,26)\path(15,26)(30,0){\path(30,0)(50,0)\put(42,0){\vector(1,0){0.0}} \path(0,0)(15,26)\path(15,26)(30,0)\put(8,14){\vector(1,2){0.0}}\put(23.6,11){\vector(1,-2){0.0}}}%
{\linethickness{0.35mm}\bezier{300}(0,0)(15,-12)(30,0)\Thicklines\path(0,0)(30,0)}%
{\bezier{300}(50,0)(38,19)(50,20)\bezier{300}(50,0)(62,19)(50,20)}
\multiput(0,0)(30,0){2}{\circle*{3.5}}\put(50,0){\circle*{3.5}}\put(15,26){\circle*{3.5}}}
\put(70,10){\path(0,0)(15,26)\path(15,26)(30,0){\path(0,0)(30,0)\bezier{100}(50,0)(38,19)(50,20) \bezier{200}(50,0)(62,19)(50,20)\put(13,0){\vector(-1,0){0.0}}\path(0,0)(15,26)\path(15,26)(30,0)
\put(48,20){\vector(-1,0){0.0}}\put(8,14){\vector(1,2){0.0}}\put(23.6,11){\vector(1,-2){0.0}}}%
{\linethickness{0.35mm}\bezier{100}(0,0)(15,-12)(30,0)\Thicklines\path(30,0)(50,0)} \multiput(0,0)(30,0){2}{\circle*{3.5}}\put(50,0){\circle*{3.5}}\put(15,26){\circle*{3.5}}}
\put(135,10){{\path(30,0)(50,0)\put(38,0){\vector(-1,0){0.0}}\path(15,26)(30,0) \path(0,0)(30,0)\bezier{300}(0,0)(15,-12)(30,0)\put(23.6,11){\vector(1,-2){0.0}} \put(13,0){\vector(-1,0){0.0}}\put(17,-6){\vector(1,0){0.0}}}%
{\Thicklines\path(0,0)(15,26)\linethickness{0.35mm}\bezier{300}(50,0)(38,19)(50,20)
\bezier{300}(50,0)(62,19)(50,20)}\multiput(0,0)(30,0){2}{\circle*{3.5}} \put(50,0){\circle*{3.5}}\put(15,26){\circle*{3.5}}}
\put(200,10){{\path(30,0)(50,0)\put(38,0){\vector(-1,0){0.0}}\path(0,0)(30,0) \bezier{300}(0,0)(15,-12)(30,0)\path(0,0)(15,26)\put(6.6,11){\vector(-1,-2){0.0}}
\put(13,0){\vector(-1,0){0.0}}\put(17,-6){\vector(1,0){0.0}}}%
{\Thicklines\path(15,26)(30,0)\bezier{300}(50,0)(38,19)(50,20)
\bezier{300}(50,0)(62,19)(50,20)}\multiput(0,0)(30,0){2}{\circle*{3.5}}\put(50,0){\circle*{3.5}}
\put(15,26){\circle*{3.5}}}}\end{picture}\end{picture} 
%
\begin{picture}(220,90)
\put(-10,5){{\Thicklines\path(15,24)(30,50)(15,76)(0,50)(30,50)}%
\put(15,76){{\bezier{300}(0,0)(-12,19)(0,20)\bezier{300}(0,0)(12,19)(0,20)
\put(-2,19.8){\vector(-1,0){0.0}}}}{\put(26.3,35){\vector(-1,-2){0.0}}}
\put(8.6,35.2){{\vector(2,-3){0.0}}}{\bezier{300}(15,24)(30,33)(30,50)}
\put(15,24){{\bezier{300}(0,0)(-12,-19)(0,-20)\bezier{300}(0,0)(12,-19)(0,-20)
\put(-2,-19.8){\vector(-1,0){0.0}}\path(0,0)(-15,26)}}
\multiput(15,24)(0,52){2}{\circle*{3}}\multiput(0,50)(30,0){2}{\circle*{3}}}%
\put(45,10){\path(60,30)(90,30)\put(77.5,30){\vector(1,0){0.0}}
{\Thicklines\path(30,0)(30,60) \path(0,30)(60,30)\path(90,30)(90,60)(60,60)
\path(60,30)(75,7)\path(0,30)(30,0)
\put(30,60){\bezier{100}(0,0)(-12,19)(0,20)\bezier{100}(0,0)(12,19)(0,20)}
\put(30,30){\bezier{200}(0,0)(15,-15)(30,0)\bezier{200}(0,0)(15,15)(30,0)}
\put(30,60){\bezier{200}(0,0)(15,-10)(30,0)\bezier{200}(0,0)(15,10)(30,0)}}
\put(10,50){{\path(0,0)(20,-20)\put(8,-8.0){\vector(-1,1){0.0}}}}
\put(10,80){{\path(0,0)(20,-20)\put(8,-8.0){\vector(-1,1){0.0}}}}
\put(60,60){{\path(0,0)(15,27)\put(8.7,15.0){\vector(2,3){0.0}}}}
\put(90,60){{\path(0,0)(-15,27)\put(-8.2,15.0){\vector(-2,3){0.0}}}}
\put(75,7){{\path(0,0)(15,23)\put(9.0,14.0){\vector(1,2){0.0}}}}
\put(90,30){{\bezier{100}(0,0)(19,-12)(20,0)\bezier{100}(0,0)(19,12)(20,0)
\put(20,2){\vector(0,1){0.0}}}}\put(0,30){{\bezier{200}(0,0)(8,-20)(30,-30)
\bezier{200}(0,0)(2,-30)(30,-30)\put(14,-20){\vector(1,-1){0.0}}\put(6,-20){\vector(-1,1){0.0}}}}
\put(90,60){{\Thicklines\bezier{100}(0,0)(19,-12)(20,0)\bezier{100}(0,0)(19,12)(20,0)}}
\multiput(10,50)(0,30){2}{\circle*{3}}\multiput(105,87)(-30,0){3}{\circle*{3}}
\path(10,50)(10,80)\path(45,87)(105,87)\put(75,7){\circle*{3}}
\put(0,0){\multiput(30,0)(0,30){3}{\circle*{3}}\multiput(0,30)(30,0){4}{\circle*{3}}
\multiput(30,60)(30,0){3}{\circle*{3}}}}%
\put(175,10){\unitlength 0.2ex {\Thicklines\path(0,20)(50,30)(100,10)\path(50,30)(40,60)} \path(10,90)(90,90){\path(50,90)(50,30)\path(40,60)(0,20)(100,10)(40,60) \put(50,75){\vector(0,1){0.0}}\put(20,40){\vector(-1,-1){0.0}}\put(65,39){\vector(-3,2){0.0}} \put(60,13.5){\vector(4,-1){0.0}}}\multiput(10,90)(40,0){3}{\circle*{4}}\put(0,20){\circle*{4}}
\put(100,10){\circle*{4}}\put(50,30){\circle*{4}}\put(40,60){\circle*{4}}}%
\end{picture}\caption{Examples of good subgraphs (drawn in bold) in small graphs} \label{xxx-graphs}\label{yyy-graphs}\end{center}\end{figure}

From the definition of a good subgraph we immediately have the following observation.

\begin{observation} \label{observation-leaves-in-paskudny-subgraphs} Neither a leaf nor a support vertex of a graph $H$ belongs to a~good subgraph in $H$. \end{observation}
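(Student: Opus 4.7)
My plan is to assume for contradiction that some leaf or support vertex of $H$ lies in $V_Q$ and then derive a direct violation of one of the three numerical/structural conditions (1)--(3) in the definition of a good subgraph.

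First I would dispose of the leaf case. Suppose $v$ is a leaf of $H$ with $v \in V_Q$. Since by definition $Q$ has no isolated vertex, $d_Q(v) \ge 1$, while $d_H(v) = 1$ forces $d_Q(v) = 1$. Now condition (1) demands $d^+_H(v) = 1$, yet every edge of $H$ incident with $v$ already lies in $E_Q$ and so belongs to no arc of $A_E$; consequently $d^+_H(v) = 0$, a contradiction.

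Next I would handle the support vertex case, which is the slightly more delicate part. Let $s$ be a support vertex of $H$ and let $\ell$ be any leaf-neighbour of $s$, and suppose $s \in V_Q$. If the edge $s\ell$ were in $E_Q$, then $\ell$ would lie in $V_Q$, contradicting the leaf case already handled. Hence $s\ell \in E_Q^- \subseteq E$, so $s\ell$ is replaced by an arc of $A_E$, and the arc must belong to some path $P_w \in \mathcal{P}$ with $w \in V_Q$. I would split into two symmetric subcases according to the orientation of that arc. If the arc is $s\to\ell$, then $\ell$ is forced to be the end vertex of a path in $\mathcal{P}$ (as $\ell$ has no further incident edges), so condition (3) yields $d^-_H(\ell) < d_H(\ell) = 1$, i.e.\ $d^-_H(\ell) = 0$; but the arc $s\to \ell$ contributes $1$ to $d^-_H(\ell)$, a contradiction. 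If instead the arc is $\ell \to s$, I would observe that $\ell$ cannot be the initial vertex of any path (since $\ell \notin V_Q$), cannot be an inner vertex (condition (2) would require both an incoming and an outgoing arc at $\ell$, but $d_H(\ell) = 1$), and cannot be the end vertex (end vertices have at least one incoming arc, again impossible because $d^+_H(\ell) = 1$ already uses the unique incident edge). Each of the three positional possibilities collapses, so this subcase is impossible as well.

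I do not foresee a serious obstacle: the whole argument is a short degree count against the bookkeeping in (1)--(3). The only step that requires mild care is the second subcase for supports, where one must systematically rule out each of the three possible positions of $\ell$ along a path of $\mathcal{P}$; once that case analysis is written out, both parts of the observation follow immediately.
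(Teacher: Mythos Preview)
Your argument is correct. The paper itself does not give a proof of this observation at all; it merely states that the observation follows ``immediately'' from the definition of a good subgraph. Your degree-counting verification against conditions (1)--(3) is exactly the kind of routine check the authors are suppressing, and every step goes through as written. One minor simplification: in the leaf case you can also read off the contradiction directly from the formula in~(1), since $d^-_H(v)=d_H(v)-d_Q(v)-1=1-1-1=-1<0$; this avoids even the short edge-location argument.
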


\section{Structural characterization of $D\!P\!D\!P$-graphs}
\label{S:proof-main1}

In this section, we present a proof of our main result, namely Theorem~\ref{thm:main1}, which provides a characterization of  minimal $D\!P\!D\!P$-graphs. We proceed further with the following result.
	
\begin{thm} \label{propertis-of-minimal}  If $G$ is a connected graph of order at least 3,  then $G$ is a~minimal $D\!P\!D\!P$-graph if and only if $G=S_2(H)$ for some connected graph $H$, and either $(V_{S_2(H)}^o,V_{S_2(H)}^n)$ is the only $D\!P$-pair in $S_2(H)$ or $S_2(H)$ is a cycle of length 3, 6 or 9.
\end{thm}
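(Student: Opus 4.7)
The plan is to prove both implications separately, making heavy use of the structural properties established in earlier sections. For the reverse direction, assume $G = S_2(H)$ for some connected graph $H$, and write $V^o := V_{S_2(H)}^o$ and $V^n := V_{S_2(H)}^n$. If $G \in \{C_3, C_6, C_9\}$, minimality is already recorded in Corollary~\ref{c:paths_cycles}(b). Otherwise, assume $(V^o, V^n)$ is the only $DP$-pair of $G$ and suppose for a contradiction that some proper spanning subgraph $G - e$ is a $DPDP$-graph. Any $DP$-pair of $G - e$ is also a $DP$-pair of $G$ (domination is preserved under edge additions, and the perfect matching of $P$ lives in $G[P]$), so by uniqueness it must be $(V^o, V^n)$. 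I would then do a short case analysis on the type of $e$ to see that $(V^o, V^n)$ is not a $DP$-pair of $G - e$: if $e = u_e v_e$ is a matching edge inside $V^n$, the perfect matching of $G[V^n]$ is destroyed; if $e = vv_e$ (or $vv_e^j$ for a loop) with $v \in V_H \setminus L_H$, then in $G - e$ the vertex $v_e$ (respectively $v_e^j$) has as its sole neighbor another element of $V^n$, so $V^o$ no longer dominates it; and if $e = v_e(v,i)$ with $v \in L_H$, then $(v,i)$ is isolated in $G - e$ and not dominated by $V^n$.

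For the forward direction, assume $G$ is a minimal $DPDP$-graph of order at least three. Theorem~\ref{observ-2-minimal-DPDP-graphs}(4) gives $G = S_2(H)$ for some $H$ without isolated vertices, and connectedness of $G$ forces $H$ to be connected. Let $(D, P)$ be a $DP$-pair of $G$ with $(D, P) \neq (V^o, V^n)$; the task is to show $G \in \{C_3, C_6, C_9\}$. Observation~\ref{observ-1} pins every leaf of $G$ into $D$ and every support into $P$, so any discrepancy from the canonical partition lies inside $V_H \setminus L_H$ (on the $V^o$ side) and $V^n \setminus S_G$ (on the $V^n$ side). Before launching the chain argument, I first verify that some $v \in V_H \setminus L_H$ lies in $P$: otherwise $V_H \setminus L_H \subseteq D$, and any hypothetical $x \in V^n \cap D$ would be a non-support new vertex, hence of degree $2$ by Observation~\ref{pierwsze-wlasnosci-S2(H)}(5)(c) with a neighbor in $V_H \setminus L_H \subseteq D$, contradicting independence of $D$ (Theorem~\ref{observ-2-minimal-DPDP-graphs}(1)); this would force $V^n \subseteq P$ and thus $(D, P) = (V^o, V^n)$.

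The main obstacle is the chain propagation argument. Starting from $v \in (V_H \setminus L_H) \cap P$, all $G$-neighbors of $v$ are new vertices and none is a leaf of $G$, so Theorem~\ref{observ-2-minimal-DPDP-graphs}(3) yields $|N_G(v) \setminus P| = 1$; combined with the existence of a $P$-partner this gives $d_G(v) = 2$, hence $d_H(v) = 2$ by Observation~\ref{pierwsze-wlasnosci-S2(H)}(1). Writing the two $H$-edges at $v$ as $e = uv$ (the partner edge, with $v_e \in P$ paired to $v$) and $e' = vw$, independence of $D$ together with the matching structure of $G[P]$ (Theorem~\ref{observ-2-minimal-DPDP-graphs}(1)(2)) force $v_{e'} \in D$, then $w_{e'} \in P$, and then $w \in P$. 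The key check is that $w$ cannot be a leaf of $H$: if it were, then $w_{e'}$ would be a support of $G$ whose only neighbors are $v_{e'} \in D$ and the leaves $(w, i) \in D$, leaving $w_{e'} \in P$ with no partner. Hence $w \in V_H \setminus L_H$ and the analysis iterates. Finiteness of $H$ forces the chain to close into a cycle of degree-$2$ vertices in $H$, and connectedness of $H$ then upgrades this cycle to all of $H$, so $H = C_k$ for some $k \ge 1$. Finally, $G = S_2(C_k) = C_{3k}$ is minimal only for $k \in \{1,2,3\}$ by Corollary~\ref{c:paths_cycles}(b), yielding $G \in \{C_3, C_6, C_9\}$, as required.
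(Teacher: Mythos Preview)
Your argument is correct, and for the reverse direction it is essentially the paper's proof made explicit (the reduction to a single deleted edge is justified by Observation~\ref{observ-2-supergraph}, which you use implicitly).

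For the forward direction your route differs from the paper's. The paper splits on $\Delta(H)$: when $\Delta(H)\le 2$ it invokes Corollary~\ref{c:paths_cycles} directly, and when $\Delta(H)\ge 3$ it picks a vertex $v$ of maximum degree in $V_{S_2(H)}^o\setminus D'$ and, through several subcases on the loop/edge structure at $v$, exhibits an explicit edge whose removal still leaves $(D',P')$ a $D\!P$-pair, contradicting minimality. You instead run a propagation argument: any vertex of $V_H\setminus L_H$ lying in $P$ is forced to have $H$-degree~$2$, and the $D/P$ pattern then pushes both of its $H$-neighbours into the same situation, so connectedness makes $H$ a cycle and Corollary~\ref{c:paths_cycles} finishes. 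This bypasses the paper's loop subcases (3.1.1--3.1.3) and the separate appeal to Proposition~\ref{prop:adjdeg2}, at the cost of checking that the chain really closes at its starting vertex (which follows because the ``partner edge'' at each $v_i$ is uniquely determined, so $v_n=v_j$ forces $e_n=e_j$ and hence $j=0$). Two small points worth tightening: the degree-$2$ fact for $x\in V^n\setminus S_G$ is Observation~\ref{pierwsze-wlasnosci-S2(H)}(2) rather than (5)(c), and the case where the two edges at $v$ form a loop (so $H=C_1$ and $G=C_3$) should be mentioned before writing $e=uv$, $e'=vw$.
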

\begin{proof} If $G=S_2(H)$ is a cycle of length $3$, $6$ or $9$, then $G$ is clearly a~minimal $D\!P\!D\!P$-graph, as claimed. Thus assume that $(V_{S_2(H)}^o,V_{S_2(H)}^n)$ is the only $D\!P$-pair in $G=S_2(H)$. Certainly, $G$ is a $D\!P\!D\!P$-graph, and we shall prove that $G$ is a~minimal $D\!P\!D\!P$-graph. Suppose, to the contrary, that $G$ is not a minimal $D\!P\!D\!P$-graph. Then some proper spanning subgraph $G'$ of $G$ is a  $D\!P\!D\!P$-graph. Let $(D',P')$ be a $D\!P$-pair in $G'$ and, consequently, in $G$ (by Observation \ref{observ-2-supergraph}). Thus $(V_{S_2(H)}^o,V_{S_2(H)}^n)$ and $(D',P')$ are $D\!P$-pairs in $G$, and $(V_{S_2(H)}^o,V_{S_2(H)}^n) \ne (D',P')$, noting that $(V_{S_2(H)}^o,V_{S_2(H)}^n)$ is a $D\!P$-pair in no proper  spanning subgraph of $G=S_2(H)$. This contradicts the uniqueness of a $D\!P$-pair in $G$  and proves that $G$ is a minimal $D\!P\!D\!P$-graph.

Suppose next that $G$ is a minimal $D\!P\!D\!P$-graph. By Theorem \ref{observ-2-minimal-DPDP-graphs}, $G$ is a~$2$-subdivision graph of some connected graph $H$, i.e.,  $G=S_2(H)$, and the pair $(D,P)=(V_{S_2(H)}^o,V_{S_2(H)}^n)$ is a $D\!P$-pair in $S_2(H)$. It remains to prove that either $(V_{S_2(H)}^o,V_{S_2(H)}^n)$ is the only $D\!P$-pair in $S_2(H)$ or $S_2(H)$ is a cycle of length $3$, $6$ or $9$. We consider three cases depending on $\Delta(H)$.

\emph{Case 1. $\Delta(H)=1$.} In this case, $H=P_2$, and its $2$-subdivision graph $S_2(P_2)$ (which is a double star $S(r,s)$ for some positive integers $r$ and $s$) has the desired property.

\emph{Case 2. $\Delta(H)=2$.}  In this case, $H$ is a cycle $C_m$ where $m\ge 1$ or a path $P_n$ where $n\ge 3$. Now, since $S_2(H)$ is a minimal $D\!P\!D\!P$-graph, Corollary~\ref{c:paths_cycles}, implies that $H=C_m$ and $m \in [3]$, or $H=P_n$ and $n\in\{3, 4, 5\}$. In each of these six cases $S_2(H)$ has the desired property.

\emph{Case 3. $\Delta(H) \ge 3$.} In this case, we claim that $(D,P)= (V_{S_2(H)}^o, V_{S_2(H)}^n)$ is the only $D\!P$-pair in $S_2(H)$. Suppose to the contrary that $(D',P')$ is another $D\!P$-pair in $G$. Then, since $D$ and $D'$ are maximal independent sets in $G$ (by Theorem \ref{observ-2-minimal-DPDP-graphs}) and $D\ne D'$, each of the sets $D \setminus D'$ and $D' \setminus D$ is a nonempty subset of $P'$ and $P$, respectively. Let $v$ be a vertex of maximum degree among all vertices in $D \setminus D' \subseteq P'$. Since $v \in P'$, it follows from Theorem \ref{observ-2-minimal-DPDP-graphs} that $d_H(v)\ge 2$. We deal with the two cases when $d_H(v)=2$ and $d_H(v) \ge 3$ in turn.

\emph{Case 3.1. $d_H(v)\ge 3$.} We distinguish three subcases.

\emph{Subcase 3.1.1. There are only loops at $v$ in $H$.} Since $d_H(v)\ge 3$, there are at least two loops at $v$, say $e$ and $f$. Renaming loops if necessary, we may assume that $v^1_e$ is the (unique) neighbor of $v$ belonging to $P'$. We note that $v^2_e \in D'$ and that all other neighbors of $v$ in $G$, including $v^1_f$ and $v^2_f$, belong to $D'$. Therefore, $(D',P')$ is also a~$D\!P$-pair in the proper subgraph $G-vv^2_e$ of $G$, contradicting the minimality of $G$.

\emph{Subcase 3.1.2. There is exactly one loop at $v$ in $H$.} Let $e$ be the loop at $v$ in $H$ and let $f$ be an edge of $H$ incident with $v$. If $v^1_e$ ($v^2_e$, resp.) is the (unique)  neighbor of $v$ belonging to $P'$, then as in Subcase 3.1.1 we infer that $(D',P')$ is a~$D\!P$-pair in the subgraph $G-vv^2_e$ ($G-vv^1_e$, resp.) of $G$. If $v_{f}$ is the (unique)  neighbor of $v$ belonging to $P'$, then $(D',P')$ is a~$D\!P$-pair in the subgraph $G-v^1_ev^2_e$ of $G$. In both cases we get a contradiction to the minimality of $G$.

\emph{Subcase 3.1.3. There is no loop at $v$ in $H$.} In this case, there are three distinct edges, say $e$, $f$, and $g$, incident with $v$ joining $v$ to $u$, $w$, and $z$, respectively. Assume first that $u$, $w$, and $z$ are distinct and, without loss of generality,  $v_{e}$ is the (unique) neighbor of $v$ which belongs to $P'$. Then, since $G$ is a minimal $D\!P\!D\!P$-graph and $(D',P')$ is a~$D\!P$-pair in $G$, Theorem \ref{observ-2-minimal-DPDP-graphs} implies that the vertices $u_{e}$, $v_{f}$, $v_{g}$ belong to $D'$, while $u$, $w$, $w_{f}$, $z$, and $z_{g}$ belong to $P'$. This implies that $(D',P')$ is a~$D\!P$-pair in $G-vv_{g}$, contradicting the minimality of $G$. We derive similar contradictions if $u$, $w$, and $z$ are not distinct, and one of the vertices $v_e$, $v_f$, $v_g$ is the (unique) neighbor of $v$ that belongs to $P'$. We omit the proofs of these cases which are analogous to the previous case when  $u$, $w$, and $z$ are distinct.

\emph{Case 3.2. $d_H(v) =2$.} By our choice of the vertex~$v$, this implies that $d_H(x)=2$ for every $x \in D \setminus D'$. Since $\Delta(H) \ge 3$, we note that $H$ is not a cycle, implying that there is no loop at $v$. Let $e$ and $f$ be the two edges incident with $v$. Renaming the edges $e$ and $f$ if necessary, we may assume that $v_{e}$ is the (unique) neighbor of $v$ in $P'$.

Suppose that $e$ and $f$ are parallel edges. Let $u$ be the second common vertex of $e$ and $f$. In this case, we note that $d_H(u) \ge 3$ as $H$ is not a~cycle. Since $G$ is a minimal $D\!P\!D\!P$-graph and $(D',P')$ is a~$D\!P$-pair in $G$, Theorem~\ref{observ-2-minimal-DPDP-graphs} implies that the vertices $u_{e}$ and $v_{f}$ belong to $D'$, while $u$ and $u_{f}$ belong to $P'$. In particular, $u \in P'$, $d_H(u) \ge 3$, and $u_e$ is a neighbor of $u$ not in $P'$ of degree~$2$. This  contradicts Theorem~\ref{observ-2-minimal-DPDP-graphs} which states that every neighbor of $u$ not in $P'$ is a leaf of $G$. Hence, the edges $e$ and $f$ are not parallel edges. Thus, $e$ and $f$ join $v$ to distinct vertices $u$ and $w$, respectively.

Recall that by our earlier assumption, $v_{e}$ is the (unique) neighbor of $v$ in $P'$. Theorem~\ref{observ-2-minimal-DPDP-graphs} implies that the vertices $u_{e}$ and $v_{f}$ belong to $D'$, while $u$, $w$ and $v_{f}$ belong to $P'$. If $d_H(u) \ge 3$, then noting that $u_e$ is a neighbor of $u$ not in $P'$ of degree~$2$, we contradict Theorem~\ref{observ-2-minimal-DPDP-graphs}. Hence, $d_H(u) = 2$. Analogously, $d_H(w) = 2$. Let $u'$ and $w'$ be the neighbor of $u$ and $w$, respectively, different from $v$ in $H$, and so $N_H(u) \setminus \{v\} = \{u'\}$ and $N_H(w)  \setminus \{v\} = \{w'\}$. Since $H \ne C_3$, we note that $w' \ne u$ (and $u'\ne w$). We remark that possibly, $u' = w'$. Since $\Delta(H)\ge 3$, at least one of the vertices $u'$ and $w'$ is not a leaf in $H$. By symmetry, we may assume that $u'$ is not a leaf in $H$, and so $d_H(u') \ge 2$. Proposition~\ref{prop:adjdeg2} with $x=v$, $y=u$, $x'=w$, and $y'=u'$ implies that $G$ is not a minimal $D\!P\!D\!P$-graph, the final contradiction which completes the proof of Theorem~\ref{propertis-of-minimal}.
\end{proof}

\medskip
We next provide a characterization of minimal $D\!P\!D\!P$-graphs in terms of good subgraphs. In the next theorem we prove that minimal $D\!P\!D\!P$-graphs are precisely 2-subdivision graphs of graphs that do have neither an isolated vertex nor a good subgraph.

\begin{thm} \label{S2(H) is minimal DPDP-graph} A graph $G$ is a minimal $D\!P\!D\!P$-graph if and only if $G=S_2(H)$, where $H$ is a graph that has neither an isolated vertex nor a good subgraph. \end{thm}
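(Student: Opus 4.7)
The plan is to pivot on Theorem~\ref{propertis-of-minimal}, which (apart from the cyclic exceptions $C_3,C_6,C_9$) identifies minimality of a $DPDP$-graph $G=S_2(H)$ with uniqueness of the canonical $DP$-pair $(V_{S_2(H)}^o,V_{S_2(H)}^n)$. Since Theorem~\ref{observ-2-minimal-DPDP-graphs}(4) forces every minimal $DPDP$-graph to be of the form $S_2(H)$ for some isolate-free $H$, the task reduces to exhibiting a bijective correspondence between the ``second'' $DP$-pairs of $S_2(H)$ and the good subgraphs of $H$.

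For the forward implication I would argue the contrapositive: given a good subgraph $Q$ of $H$ with path family $\mathcal{P}=\{P_v:v\in V_Q\}$, I build a $DP$-pair $(D',P')$ of $S_2(H)$ different from $(V^o,V^n)$. Put into $P'$ every $v\in V_Q$, every inner vertex of every $P_v$, and, for each arc $u\to w$ of $\mathcal{P}$, the ``tail'' subdivision vertex $u_{uw}$; put into $D'$ both subdivision vertices of every edge of $Q$, the ``head'' subdivision vertex $w_{uw}$ of every arc, and the remaining vertices of $V^o$. Condition~(1) in the definition of a good subgraph supplies the matching partner $v_{vu_1}\in P'$ for each $v\in V_Q$, condition~(2) extends the matching along the inner vertices of each path, and condition~(3) ensures that a path's end-vertex $x\in D'$ retains at least one neighbor in $P'$ (coming from an edge of $H_0$ or from a path on which $x$ is inner/initial). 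The domination verifications are routine once one observes, via Observation~\ref{pierwsze-wlasnosci-S2(H)}, that every subdivision vertex has only two neighbors in $G$. Since $V_Q\neq\emptyset$ by Observation~\ref{observation-leaves-in-paskudny-subgraphs}, $(D',P')\neq(V^o,V^n)$; Theorem~\ref{propertis-of-minimal} then forces $G\in\{C_3,C_6,C_9\}$. One checks directly that the only candidates $H\in\{C_1,C_2,C_3\}$ satisfy $d_H(v)=2$ for every $v$, so condition~(1) would demand $d^-_H(v)=2-d_Q(v)-1\ge 0$, which fails on any nonempty subgraph; hence those $H$ admit no good subgraph, the desired contradiction.

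For the reverse implication, assume $H$ has no isolated vertex and no good subgraph. By Proposition~\ref{prop-S2(H)-jest-DPDP-grafem}, $S_2(H)$ is a $DPDP$-graph, so by Theorem~\ref{propertis-of-minimal} it suffices to show $(V^o,V^n)$ is the unique $DP$-pair of $S_2(H)$ unless $S_2(H)\in\{C_3,C_6,C_9\}$. Assume to the contrary a second $DP$-pair $(D',P')$ exists; I would extract a good subgraph by setting $V_Q:=P'\cap(V_H\setminus L_H)$ and letting $E_Q$ be the set of edges of $H$ both of whose subdivisions lie in $D'$. Using Theorem~\ref{observ-2-minimal-DPDP-graphs}(2)--(3) (applied to the pair $(D',P')$ within a minimal $DPDP$-subgraph of $S_2(H)$ provided by Observation~\ref{observ-2-supergraph}), the perfect matching in $G[P']$ assigns to each $v\in V_Q$ a unique ``outgoing'' subdivision $v_{vu}$, which determines the orientation $v\to u$; iterating along the matching traces out an oriented path starting at $v$ until it reaches a vertex whose matching partner is not of this subdivision form. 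Conditions~(1)--(3) of the good subgraph definition then translate term for term into the identities $d_G(x)=d_H(x)$, the matching constraint $\Delta(G[P'])=1$, and the existence (guaranteed by condition~(3)) of an extra $P'$-neighbor at each path's end, respectively. Thus $Q$ is a good subgraph of $H$, contradicting our hypothesis.

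The principal obstacle in both directions is the careful bookkeeping around the boundary of the construction: verifying (in the forward direction) that the end-vertices of paths are dominated by $P'$ via the edge guaranteed by condition~(3) and that loops or parallel edges incident to $V_Q$ do not break the matching; and (in the reverse direction) that the matching chains produced from $V_Q$ actually terminate in a way consistent with the degree equality $d^-_H(x)<d_H(x)$ at path ends, even when $\mathcal{P}$ is not vertex-disjoint. Both of these reduce to a case analysis driven by Observation~\ref{pierwsze-wlasnosci-S2(H)}(5) and Theorem~\ref{observ-2-minimal-DPDP-graphs}(3), but the bookkeeping is the technical heart of the argument.
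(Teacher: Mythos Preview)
Your strategy of routing everything through Theorem~\ref{propertis-of-minimal} and building a dictionary ``good subgraph of $H$ $\leftrightarrow$ second $DP$-pair in $S_2(H)$'' is natural, but the reverse half of that dictionary is false as you have set it up, and this breaks your converse implication.

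In the forward direction your construction is essentially the paper's, with one omission: the subdivision vertices of edges in $E_{H_0}=E_H\setminus(E_Q\cup E)$ are assigned to neither $D'$ nor $P'$; they must go into $P'$ (matched in their natural pairs) or $(D',P')$ is not even a partition of $V_G$. The paper in fact does a little more here: it shows $(D',P')$ is a $DP$-pair in a \emph{proper spanning subgraph} of $G$, which contradicts minimality directly and avoids the appeal to Theorem~\ref{propertis-of-minimal}.

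The genuine gap is in the converse. Theorem~\ref{observ-2-minimal-DPDP-graphs} applies only to $DP$-pairs in \emph{minimal} $DPDP$-graphs, and minimality of $G$ is precisely what you are trying to establish. Your fix, ``apply it to $(D',P')$ within a minimal $DPDP$-subgraph,'' does not work: a minimal $DPDP$-subgraph $G''\subseteq G$ need not admit $(D',P')$ as a $DP$-pair at all. Even if you instead pass to the subgraph that is edge-minimal with respect to $(D',P')$ (where the conclusions of Theorem~\ref{observ-2-minimal-DPDP-graphs} do hold, by inspecting its proof), your extraction $V_Q:=P'\cap(V_H\setminus L_H)$, $E_Q:=\{e\in E_H:\xi(e)\subseteq D'\}$ can fail outright. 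For a concrete counterexample, take $G=C_{12}=S_2(C_4)$ and let $(D',P')$ be the canonical pair rotated by one position. Then $G$ itself is already edge-minimal with respect to $(D',P')$, yet $V_Q=V_{C_4}$ and $E_Q=\emptyset$, so your $Q$ consists of four isolated vertices and is not a good subgraph. Thus an arbitrary second $DP$-pair does not encode a good subgraph via your recipe.

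The paper circumvents this entirely by never working with an arbitrary second $DP$-pair. It assumes $G=S_2(H)$ is not minimal, takes a minimal $DPDP$ proper spanning subgraph $G'$, notes $G'=S_2(H')$ by Theorem~\ref{observ-2-minimal-DPDP-graphs}(4), and then uses Observation~\ref{pierwsze-wlasnosci-S2(H)}(6) to analyse the components of $G'$ inside $G$: each component with a leaf in $V^n_{S_2(H)}$ is a subdivided short path with exactly one support vertex lying in $V_H$, and \emph{those} supports form $V_Q$, the removed middle edges form $E_Q$, and the paths themselves furnish the oriented family~$\mathcal P$. This component analysis is the structural ingredient your sketch is missing.
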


\begin{proof} Assume first that $G$ is a minimal $D\!P\!D\!P$-graph, and let $(D,P)$ be a~$D\!P$-pair in $G$. It follows from Theorem \ref{observ-2-minimal-DPDP-graphs} that  $G=S_2(H)$ for some graph $H$. Since no $D\!P\!D\!P$-graph has an isolated vertex, neither $S_2(H)$ nor $H$ has an isolated vertex. We now claim that $H$ has no good subgraph. Suppose, to the contrary, that $Q$ is a good subgraph in $H$. By definition, there exist a set of edges $E$ (where $E_Q^- \subseteq E\subseteq E_H \setminus E_Q$) and an orientation $A_E$ of $E$ such that in the partially oriented graph $H(A_E)$ there exists a family of oriented paths ${\cal P}=\{P_x \colon x\in V_Q\}$ satisfying the properties (1)--(3) stated in the definition of a good subgraph.

We adopt the following notation: If $e$ is an edge belonging to $E$, $\varphi_H(e) =\{v,u\}$, $\xi(e)= \{v_e,u_e\}$, and $e_A=(v,u)$, then $v,v_e,u_e,u$ is the $4$-path corresponding to $e$ in $S_2(H)$, and we write $p_1(e)=v$, $p_2(e)=v_e$, $p_3(e)=u_e$, and $p_4(e)=u$. If $e$ is a loop belonging to $E$, $\varphi_H(e) =\{v\}$, $\xi(e)= \{v_e^1,v_e^2\}$, then $v,v_e^1,v_e^2,v$ is the $3$-cycle corresponding to $e$ in $S_2(H)$, and we write $p_1(e)=v$, $p_2(e)=v_e^1$, $p_3(e)=v_e^2$, and $p_4(e)=v$. Finally, we denote by $e(P_x)$ the edge in $E$ corresponding to the last arc (or loop) in the oriented path $P_x\in {\cal P}$.

Let us consider now the spanning subgraph $G'$ of $G=S_2(H)$ in which
\[
E_{G'}= E_{S_2(H)}  \setminus \left(\bigcup_{e\in E_Q}\{ xy\colon \xi(e)=\{x,y\}\} \cup \{p_3\big(e(P_x)\big)p_4\big(e(P_x)\big)\colon P_x\in {\cal P}\}\right).
\]
More~intuitively, $G'$ is the graph obtained from $S_2(H)$ by removing the middle edge from the $4$-path corresponding to each edge of $Q$, and the third edge from the $4$-path corresponding to the last arc in every path $P_x\in {\cal P}$. A graph $H$, its $2$-subdivision graph $S_2(H)$, and the subgraph $G'$ of $S_2(H)$ corresponding to a good subgraph $Q$ in $H$ (drawn in bold) and a family of oriented paths ${\cal P}= \{P_x \colon x\in V_Q\}$ are shown in Fig. \ref{graphs-for-proofs}. Formally, $H$, $S_2(H)$, and $G'$ are the underlying graphs of the graphs in Fig.~\ref{graphs-for-proofs}.

We note that the sets
\[
D'= V_{H_0} \cup \{p_3(e_A)\colon e_A\in A_E\} \cup \bigcup_{e\in E_Q}\xi(e)
\]
and
\[
P'= \bigcup_{e_A\in A_E}\{p_1(e_A), p_2(e_A)\} \cup \bigcup_{e\in E_{H_0}}\xi(e)
\]
form a partition of the vertex set of $G'$. We now claim that $(D',P')$ is a~$D\!P$-pair in $G'$. If $V_{H_0}\ne \emptyset$, then it follows from the construction of $G'$ that $G'[V_{H_0}\cup \bigcup_{e\in E_{H_0}}\xi(e)] =S_2(H_0)$ and therefore, as it follows from the proof of Proposition~\ref{prop-S2(H)-jest-DPDP-grafem}, the pair $(V_{H_0},\bigcup_{e\in E_{H_0}}\xi(e))$ is a~$D\!P$-pair in $S_2(H_0)$. Thus, it remains to prove that the sets $D'' = D' \setminus V_{H_0}$ and $P''=P' \setminus \bigcup_{e\in E_{H_0}}\xi(e)$ form a~$D\!P$-pair in $G''=G'-S_2(H_0)$.

We show firstly that $D''$ is a dominating set of $G''$. Let $x$ be an arbitrary vertex in $V_{G''} \setminus D''=P''$. Then either $x=p_2(e_A)$ or $x=p_1(e_A)$ for some $e_A\in A_E$. In the first case $x$ is adjacent to $p_3(e_A)\in D''$. Thus assume that $x=p_1(e_A)$ and $e_A\in A_E$. If  $x=p_1(e_A)\in V_Q$, then   there exists an edge $f$ in $Q$ incident with $x$, and therefore $x$ is adjacent to $v_f\in \xi(f)\subseteq D''$. Finally assume that $x=p_1(e_A)\not\in V_Q$. Now $e_A$ belongs to some oriented path $P_v\in {\cal P}$. Since $x=p_1(e_A)\not\in V_Q$, there exists an arc $f_A$ on $P_v$ such that $p_4(f_A)=x=p_1(e_A)$, and therefore $x$ is adjacent to $p_3(f_A)\in D''$. This proves that $D''$ is a dominating set of $G''$.

We show next that $P''$ is a dominating set of $G''$. Let $y$ be an arbitrary vertex in $V_{G''} \setminus P''=D''$. If $y=p_3(e_A)$ for some $e_A\in A_E$, then $y$ is adjacent to $p_2(e_A)\in P''$. Finally assume that $y\in \xi(e)$ for some $e\in E_Q$. Without loss generality, we may assume that $\varphi_H(e)=\{u,v\}$, $\xi(e)=\{v_e,u_e\}$, and $y=v_e$. Thus, $y$ is adjacent to $p_1(f_A)\in P''$ where $f_A$ is the first arc in the unique path $P_v \in {\cal P}$ starting at $v$. This implies that $P''$ is a dominating set of $G''$. In addition, $P''$ is a paired-dominating set of $G''$, as the edges $p_1(e_A)p_2(e_A)$, where $e_A\in A_E$, form a perfect matching in the subgraph induced by $P''$. This proves that $(D'',P'')$ is a~$D\!P$-pair in $G''$, and implies that $(D',P')$ is a~$D\!P$-pair in a~proper spanning subgraph $G'$ of $G$, contradicting the minimality of $G$.

Assume now that $H$ is a graph that has neither an isolated vertex nor a good subgraph.
By Proposition~\ref{prop-S2(H)-jest-DPDP-grafem}, the $2$-subdivision graph $G=S_2(H)$ of $H$ is a $D\!P\!D\!P$-graph. We claim that $G$ is a minimal $D\!P\!D\!P$-graph. Suppose, to the contrary, that $G$ is not a minimal $D\!P\!D\!P$-graph. Thus some proper spanning subgraph $G'$ of $G$ is a~minimal $D\!P\!D\!P$-graph, and it follows from Theorem \ref{observ-2-minimal-DPDP-graphs} that $G'$ is a $2$-subdivision graph of some graph $H'$, i.e., $G'=S_2(H')$.

Since $G'$ is a~proper spanning subgraph of $G$, the set $E_G \setminus E_{G'}$ (of the edges removed from $G$) is nonempty and it is the union of disjoint subsets $E_{nn}'= (E_{G} \setminus E_{G'})\cap E_{nn}$ and $E_{no}'= (E_{G} \setminus E_{G'}) \setminus E_{nn}$, where $E_{nn}$ is the set of edges of $G$ each of which joins two vertices in $\bigcup_{e\in E_H} \xi(e)$. It follows from the definition of the $2$-subdivision graph that if $xy\in E_{G} \setminus E_{G'}$, then both $x$ and $y$ are leaves in $G'$ if $xy\in E_{nn}'$ and at least one of the vertices $x$ and $y$ is a~leaf in $G'$ if $xy\in E_{no}'$,  and $\{x,y\}\cap N_G[L_G]= \emptyset$ (since $G'$ is a~$D\!P\!D\!P$-graph). This implies that $G'$ has two types of components: those which have at least one leaf belonging to the set $V_{S_2(H)}^n$,  and those in which no leaf belongs to $V_{S_2(H)}^n$. From this and from Observation \ref{pierwsze-wlasnosci-S2(H)}\,(6) (and Corollary~\ref{c:paths_cycles}) it follows that if $F$ is a~component of $G'$, then  $F=S_2(P_{k+1})$ for some $k\in [4]$ and $F$ has at most one strong support vertex if $L_F\cap V_{S_2(H)}^n \ne  \emptyset$ or $F$ is an induced subgraph of $G$ if $L_F\cap V_{S_2(H)}^n = \emptyset$.

Let $F_1,\ldots,F_\ell$ be that components of $G'$ for which $L_{F_i}\cap V_{S_2(H)}^n \ne  \emptyset$ where $i \in [\ell]$. From this and from the fact that $F_i= S_2(P_{k_i+1})$ is of diameter $3k_i+1$ it follows that exactly one support vertex of $F_i$ is a vertex of $H$, say $\{v^i\}= S_{F_i}\cap V_H$ for $i \in [\ell]$. Let $\overline{v}^i$ be the (unique) leaf farthest from $v^i$ in $F_i$, and let $\widetilde{v}^i$ be the only vertex in $N_G(\overline{v}^i) \setminus N_{G'}(\overline{v}^i) \subseteq V_H$. Let $\overline{P}_i$ be the $v^i-\overline{v}^i$ path in $F_i$, and let $\widetilde{P}_i$ be the $v^i-\widetilde{v}^i$ path obtained from $\overline{P}_i$ by adding $\widetilde{v}^i$ and the edge $\overline{v}^i\widetilde{v}^i$. Since $v^i, \widetilde{v}^i\in V_{G'}$ and $d_{G'}(v^i, \widetilde{v}^i)= 3k_i-1$ for some $k_i \in [4]$, we may assume that $\overline{P}_i$ is the path $v^i = x^0, x^1, \ldots, x^{3k_i-1}= \overline{v}^i$ and $\widetilde{P}_i$ is the path $v^i = x^0, x^1, \ldots, x^{3k_i-1} = \overline{v}^i,x^{3k_i}= \widetilde{v}^i$, where $x^0, x^3, \ldots, x^{3k_i} \in V_H$, while $x^{3j+1} = x_e^{3j}$ and $x^{3j+2}= x_e^{3j+3}$, where $e$ is an edge joining $x^{3j}$ and $x^{3j+3}$ in $H$ for $j \in \{0\} \cup [k_i-1]$ (or  $x^{3j+1}= x_e^{3j\, 1}$ and $x^{3j+2}= x_e^{3j\,2}$ if $e$ is a loop at $x^{3j}$ and $j=k_i-1$). Now let $P_i$ be the oriented path $(x^0,a(x^0,x^3), x^3,\ldots, x^{3k_i-3}, a(x^{3k_i-3},x^{3k_i}), x^{3k_i})$ in $H$, where $a(x^{3j},x^{3j+3})$ is the arc which goes from $x^{3j}$ to $x^{3j+3}$ and which corresponds to the path $(x^{3j},x^{3j+1},x^{3j+2},x^{3j+3})$ in the path $\widetilde{P}_i$ for $j \in \{0\} \cup [k_i-1]$.

Let $Q=(V_Q,E_Q)$ be the subgraph of $H$, where $V_Q$ consists of those vertices of $H$ which are support vertices in $F_1,\ldots, F_\ell$, that is, $V_Q=\{v^1,v^2,\ldots,v^\ell\}$, and $E_Q$ consists of those edges (and loops) of $H$ whose middle edges were removed in the process of forming $G'$ from $G$, i.e., $E_Q= \{e\in E_H\colon \xi(e)=\{x,y\} \,\,{\rm and}\,\, xy\in E_{nn}'\}$ (see Fig. \ref{graphs-for-proofs}, where $Q$ (defined by $G'$) is the bold subgraph of the underlying graph of $H$). All that remains to prove is that $Q$ is a good subgraph in $H$.

Since the paths $\widetilde{P}_1, \ldots, \widetilde{P}_\ell$ are edge-disjoint in $G'$, it follows from the definition of $P_1, \ldots, P_\ell$ that ${\cal P}= \{P_1, \ldots, P_\ell\}$  is a family of arc-disjoint (not necessarily vertex-disjoint) oriented paths (in $H$) indexed by the vertices of $Q$. In addition, $P_i$ is the only path belonging to ${\cal P}$ and growing out from the vertex $v^i\in V_Q$, implying that $d^+_H(v^i)=1$ and $d^-_H(v^i)= d_H(v^i)-d_Q(v^i)-1$ for $i \in [\ell]$. From the same fact it follows that if the paths $P_i, P_j\in {\cal P}$, where $i \ne j$, are not vertex-disjoint, then the end vertex of (at least) one of them is the only vertex belonging to the second one. Consequently, if $x$ is a non-end vertex of a path $P_i\in {\cal P}$, then $d_H^+(x)=1$ (and $d^-_H(x)=d_H(x)-1$). Finally assume that $y$ is an end vertex of a path $P_i\in {\cal P}$. If $d_H^-(y)\ge d_H(y)$, then $y$ would be an isolated vertex in a $D\!P\!D\!P$-graph $G'$, which is impossible. Therefore, $d_H^-(y)<d_H(y)$. This proves that $Q$ is a good subgraph in $H$ and this completes the proof of Theorem~\ref{S2(H) is minimal DPDP-graph}.
\end{proof}

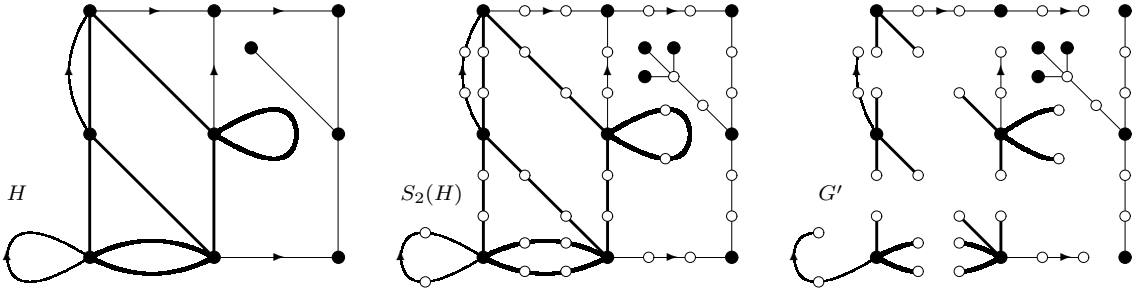
\begin{figure}[h!]\begin{center}  \special{em:linewidth 0.4pt} \unitlength 0.3ex \linethickness{0.4pt} \begin{picture}(340,80)
\put(-15,-5){\put(20,10){{\Thicklines\bezier{300}(20,10)(35,2)(50,10)
\bezier{300}(20,10)(35,18)(50,10)\path(20,10)(20,70)\path(20,40)(50,10)(50,40)(20,70)
\bezier{300}(50,40)(69,52)(70,40)\bezier{300}(50,40)(69,28)(70,40)}%
\path(80,10)(80,70){\bezier{300}(20,40)(10,55)(20,70)\put(15,57){\vector(0,1){0.0}} \path(20,70)(80,70)\path(50,40)(50,70)\path(50,10)(80,10)\put(37,70){\vector(1,0){0.0}} \put(67,70){\vector(1,0){0.0}}\put(67,10){\vector(1,0){0.0}}\put(50,57){\vector(0,1){0.0}}
\bezier{300}(20,10)(1,-2)(0,10)\bezier{300}(20,10)(1,22)(0,10)\put(0,12){\vector(0,1){0.0}}}
\multiput(20,10)(30,0){3}{\circle*{3}}\multiput(20,40)(30,0){3}{\circle*{3}}
\multiput(20,70)(30,0){3}{\circle*{3}}\path(59,61)(80,40)\put(59,61){\circle*{3}}\put(0,25){${}_{H}$}}}
\put(80,-5){\put(20,10){{\Thicklines\bezier{300}(20,10)(35,2)(50,10)
\bezier{300}(20,10)(35,18)(50,10)\path(20,10)(20,70)\path(20,40)(50,10)(50,40)(20,70)
\bezier{300}(50,40)(69,52)(70,40)\bezier{300}(50,40)(69,28)(70,40)}%
\path(80,10)(80,70){\bezier{300}(20,40)(10,55)(20,70)\put(15,57){\vector(0,1){0.0}} \path(20,70)(80,70)\path(50,40)(50,70)\path(50,10)(80,10)\put(37,70){\vector(1,0){0.0}} \put(67,70){\vector(1,0){0.0}}\put(67,10){\vector(1,0){0.0}}\put(50,57){\vector(0,1){0.0}}
\bezier{300}(20,10)(1,-2)(0,10)\bezier{300}(20,10)(1,22)(0,10)\put(0,12){\vector(0,1){0.0}}}
\multiput(20,10)(30,0){3}{\circle*{3}}\multiput(20,40)(30,0){3}{\circle*{3}}
\multiput(20,70)(30,0){3}{\circle*{3}}\path(59,61)(80,40)\put(59,61){\circle*{3}}
\path(59,54)(66,54)(66,61)\multiput(59,54)(7,7){2}{\circle*{3}}
\multiput(64,34)(0,11.8){2}{\whiten\circle{2.5}}\multiput(20,20)(0,10){2}{\whiten\circle{2.5}} \multiput(20,50)(0,10){2}{\whiten\circle{2.5}}\multiput(50,20)(0,10){2}{\whiten\circle{2.5}}
\multiput(50,50)(0,10){2}{\whiten\circle{2.5}}\multiput(80,20)(0,10){2}{\whiten\circle{2.5}}
\multiput(80,50)(0,10){2}{\whiten\circle{2.5}}\multiput(15.6,50)(0,10){2}{\whiten\circle{2.5}}
\multiput(6,4)(0,12){2}{\whiten\circle{2.5}}\multiput(30,6.6)(10,0){2}{\whiten\circle{2.5}}
\multiput(30,13.5)(10,0){2}{\whiten\circle{2.5}}\multiput(30,30)(10,-10){2}{\whiten\circle{2.5}}
\multiput(30,60)(10,-10){2}{\whiten\circle{2.5}}\multiput(30,70)(10,0){2}{\whiten\circle{2.5}}
\multiput(60,70)(10,0){2}{\whiten\circle{2.5}}\multiput(60,10)(10,0){2}{\whiten\circle{2.5}}
\multiput(73,47)(-7,7){2}{\whiten\circle{2.5}}\put(0,25){${}_{S_2(H)}$}}}
\put(175,-5){\put(20,10){{\Thicklines
\bezier{200}(20,10)(25,7)(30,6.3)\bezier{200}(40,6.3)(45,7)(50,10)
\bezier{200}(20,10)(25,12.9)(30,13.6)\bezier{200}(40,13.6)(45,12.9)(50,10)
\path(20,10)(20,20)\path(20,30)(20,50)\path(20,60)(20,70)\path(20,40)(30,30)
\path(40,20)(50,10)(50,20)\path(50,30)(50,40)(40,50)\path(30,60)(20,70)
\bezier{100}(50,40)(60,47)(64,45.8)\bezier{100}(50,40)(60,33)(64,34)}%
\path(80,10)(80,70){\bezier{300}(20,40)(13.7,50)(15.6,60)\put(15.2,57){\vector(0,1){0.0}}
\path(20,70)(70,70)\path(50,40)(50,60)\path(50,10)(70,10)\put(37,70){\vector(1,0){0.0}}
\put(67,70){\vector(1,0){0.0}}\put(67,10){\vector(1,0){0.0}}\put(50,57){\vector(0,1){0.0}}
\bezier{300}(20,10)(1,-2)(0,10)\bezier{300}(0,10)(0,16)(6,16)\put(0,12){\vector(0,1){0.0}}}
\multiput(20,10)(30,0){3}{\circle*{3}}\multiput(20,40)(30,0){3}{\circle*{3}}
\multiput(20,70)(30,0){3}{\circle*{3}}\path(59,61)(80,40)\put(59,61){\circle*{3}}
\path(59,54)(66,54)(66,61)\multiput(59,54)(7,7){2}{\circle*{3}}
\multiput(64,34)(0,11.8){2}{\whiten\circle{2.5}}
\multiput(20,20)(0,10){2}{\whiten\circle{2.5}}\multiput(20,50)(0,10){2}{\whiten\circle{2.5}}
\multiput(50,20)(0,10){2}{\whiten\circle{2.5}}\multiput(50,50)(0,10){2}{\whiten\circle{2.5}}
\multiput(80,20)(0,10){2}{\whiten\circle{2.5}}\multiput(80,50)(0,10){2}{\whiten\circle{2.5}}
\multiput(15.6,50)(0,10){2}{\whiten\circle{2.5}}\multiput(6,4)(0,12){2}{\whiten\circle{2.5}}
\multiput(30,6.6)(10,0){2}{\whiten\circle{2.5}}\multiput(30,13.5)(10,0){2}{\whiten\circle{2.5}}
\multiput(30,30)(10,-10){2}{\whiten\circle{2.5}}\multiput(30,60)(10,-10){2}{\whiten\circle{2.5}}
\multiput(30,70)(10,0){2}{\whiten\circle{2.5}}\multiput(60,70)(10,0){2}{\whiten\circle{2.5}}
\multiput(60,10)(10,0){2}{\whiten\circle{2.5}}\multiput(73,47)(-7,7){2}{\whiten\circle{2.5}}
\put(6,25){${}_{G'}$}} } \end{picture}\caption{Graphs $H$, $S_2(H)$, and a minimal spanning $D\!P\!D\!P$-subgraph $G'$ of $S_2(H)$} \label{graphs-for-proofs}\end{center}\end{figure}

\medskip
We are now in a position to present a proof of our main result, namely Theorem~\ref{thm:main1}. Recall its statement. \vskip 0.25 cm

\noindent \textbf{Theorem~\ref{thm:main1}}. \emph{If $G$ is a connected graph of order at least three, then the following statements are equivalent:
\begin{enumerate}
\item[$(1)$] $G$ a minimal $D\!P\!D\!P$-graph.
\item[$(2)$] $G=S_2(H)$ for some connected graph $H$, and either $(V_{S_2(H)}^o,V_{S_2(H)}^n)$ is the unique $D\!P$-pair in $G$ or $G$ is a cycle of length 3, 6 or 9.
\item[$(3)$] $G=S_2(H)$ for some connected graph $H$ that has neither an isolated vertex nor a good subgraph.
\item[$(4)$] $G=S_2(H)$ for some connected graph $H$ and no proper spanning subgraph of $G$ without isolated vertices is a $2$-subdivided graph.
\end{enumerate}
}
\begin{proof}
The statements (1), (2), and (3) are equivalent by Theorems \ref{propertis-of-minimal} and \ref{S2(H) is minimal DPDP-graph}. We shall prove that (1) and (4) are equivalent.

Assume that $G$ is a minimal $D\!P\!D\!P$-graph.
By Theorem \ref{observ-2-minimal-DPDP-graphs}, $G=S_2(H)$ for some connected graph $H$. In addition, since $G$ is a minimal $D\!P\!D\!P$-graph, no proper spanning subgraph of $G$ is a $D\!P\!D\!P$-graph. Thus no proper spanning subgraph of $G$ having no isolated vertex is a $2$-subdivided graph, as, by Proposition~\ref{prop-S2(H)-jest-DPDP-grafem}, every $2$-subdivided graph of a graph with no isolated vertex is a $D\!P\!D\!P$-graph. This proves the implication $(1)\Rightarrow (4)$.

If $G=S_2(H)$ for some connected graph $H$, then $G$ is a $D\!P\!D\!P$-graph (by Proposition~\ref{prop-S2(H)-jest-DPDP-grafem}). Assume that no proper spanning subgraph of $G$ without isolated vertices is a $2$-subdivided graph. We claim that $G$ is a minimal $D\!P\!D\!P$-graph. Suppose, to the contrary, that $G$ is not a minimal $D\!P\!D\!P$-graph. Then, since $G$ is a $D\!P\!D\!P$-graph, some proper spanning subgraph $G'$ of $G$ is a minimal $D\!P\!D\!P$-graph. Consequently, $G'$ has no isolated vertex (as no $D\!P\!D\!P$-graph has an isolated vertex). In addition, from the minimality of $G'$ and from Theorem \ref{observ-2-minimal-DPDP-graphs} it follows that $G'$ is a~$2$-subdivided graph. But this contradicts the statement (4) and proves the implication $(4)\Rightarrow (1)$.
\end{proof}

\medskip
The \emph{corona} $F \circ K_1$ of a graph $F$ is the graph obtained from $F$ by adding a pendant edge to each vertex of $F$. A \emph{corona graph} is a graph obtained from a graph $F$ by attaching any number of pendant edges to each vertex of $F$. In particular, the corona $F \circ K_1$ of a graph $F$ is a  corona graph.

\begin{cor} \label{S2-of-corona-is-a-minimal}
If $H$ is a corona graph, then its $2$-subdivision graph $S_2(H)$ is a~minimal $D\!P\!D\!P$-graph. In particular, $S_2(F \circ K_1)$ is a minimal $D\!P\!D\!P$-graph for every graph~$F$.
\end{cor}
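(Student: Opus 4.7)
My plan is to reduce everything to Theorem \ref{S2(H) is minimal DPDP-graph}, which states that $S_2(H)$ is a minimal $D\!P\!D\!P$-graph precisely when $H$ has neither an isolated vertex nor a good subgraph. Thus it suffices to verify that a corona graph $H$ satisfies both of these conditions. The absence of isolated vertices is immediate: by the definition of a corona graph, $H$ arises from some graph $F$ by attaching at least one pendant edge to every vertex of $F$, so each original vertex of $F$ acquires a leaf-neighbor in $H$, and each newly-added pendant vertex is adjacent (in $H$) to its support in $V_F$.

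The substantive step is to rule out the existence of a good subgraph in $H$. For this I will invoke Observation \ref{observation-leaves-in-paskudny-subgraphs}, which prohibits leaves and supports from belonging to any good subgraph of $H$. The key structural remark is that the vertex set of a corona graph admits the decomposition $V_H = V_F \cup L_H$, and moreover every vertex of $V_F$ is a support vertex in $H$ (because at least one pendant edge was attached to it), while every vertex of $L_H$ is a leaf by construction. Consequently, every vertex of $H$ is either a leaf or a support vertex, and hence by Observation \ref{observation-leaves-in-paskudny-subgraphs} no vertex of $H$ can lie in a good subgraph. Since a good subgraph (being a subgraph without isolated vertices, as required by its definition) must contain at least one vertex, it follows that $H$ has no good subgraph at all.

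Combining the two preceding paragraphs with Theorem \ref{S2(H) is minimal DPDP-graph} then yields that $S_2(H)$ is a minimal $D\!P\!D\!P$-graph, establishing the first assertion. The ``in particular'' clause is immediate, since $F \circ K_1$ is by definition the corona graph obtained by attaching exactly one pendant edge to each vertex of $F$, and hence falls within the scope of the first assertion. I anticipate no genuine obstacle: once the characterization in Theorem \ref{S2(H) is minimal DPDP-graph} and the leaf/support exclusion of Observation \ref{observation-leaves-in-paskudny-subgraphs} are available, the whole argument collapses to the observation that in a corona graph every vertex is either a leaf or a support.
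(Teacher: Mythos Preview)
Your proposal is correct and follows exactly the same route as the paper's proof: both observe that every vertex of a corona graph is a leaf or a support vertex, invoke Observation~\ref{observation-leaves-in-paskudny-subgraphs} to conclude that $H$ has no good subgraph, and then apply Theorem~\ref{S2(H) is minimal DPDP-graph}. The paper's version is simply more terse, omitting the explicit verification that $H$ has no isolated vertex.
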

\begin{proof}  Since every vertex of a corona graph is a leaf or a support vertex, it follows from Observation \ref{observation-leaves-in-paskudny-subgraphs} that $H$ has no good subgraph, and, therefore, $S_2(H)$ is a~minimal $D\!P\!D\!P$-graph, by Theorem \ref{S2(H) is minimal DPDP-graph}. \end{proof}

\begin{cor} \label{S2-of-S2-is-not-a-minimal} If $H$ is a connected graph, then $S_2(S_2(H))$ is a minimal $D\!P\!D\!P$-graph if and only if $H$ has either  exactly one edge or exactly one loop. \end{cor}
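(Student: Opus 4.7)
The plan is to reduce the corollary to Theorem~\ref{S2(H) is minimal DPDP-graph}: $S_2(S_2(H))$ is a minimal $D\!P\!D\!P$-graph iff $S_2(H)$ has no isolated vertex and no good subgraph. Since $H$ has no isolated vertex, neither does $S_2(H)$, so the task becomes to characterize when $S_2(H)$ admits no good subgraph, and I claim this happens precisely when $|E_H|=1$.

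For the ``if'' direction I split by the type of the unique element of $E_H$. If it is an edge, then $H=K_2$ and $S_2(H)$ is a double star whose every vertex is a leaf or a support, so Observation~\ref{observation-leaves-in-paskudny-subgraphs} forbids any good subgraph. If it is a loop, then $H$ is a one-vertex graph with a loop $e$ and $S_2(H)=K_3$; a short enumeration of the nonempty edge-subsets of $K_3$ shows that every candidate $Q$ either forces $d_H(x)-d_Q(x)-1<0$ in condition~(1) or forces two arcs to terminate at the same degree-$2$ vertex, violating condition~(3). Equivalently, the paper's remark identifies $C_9=S_2(K_3)=S_2(S_2(H))$ as a minimal $D\!P\!D\!P$-graph.

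For the ``only if'' direction I assume $|E_H|\ge 2$ and build an explicit good subgraph of $S_2(H)$, splitting by whether $H$ contains a loop. If $H$ has a loop $e$ at a vertex $v$, connectivity plus $|E_H|\ge 2$ forces another edge at $v$ and hence $d_H(v)\ge 3$; take $Q$ to be the single edge $v^1_e v^2_e$ of $S_2(H)$ with $E=E_Q^-=\{vv^1_e,vv^2_e\}$ oriented both toward $v$, so the two length-one paths $P_{v^i_e}\colon v^i_e\to v$ verify condition~(1) at each initial vertex ($d^+=1,\; d^-=2-1-1=0$) and condition~(3) at the common terminal vertex $v$ ($d^-(v)=2<3\le d_{S_2(H)}(v)$). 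If instead $H$ has no loops, then by connectivity some vertex $v$ has $k=d_H(v)\ge 2$ with incident edges $e_j=vu_j$; take $Q=\{vv_{e_1},\dots,vv_{e_{k-1}}\}$, a star at $v$ missing one arm, set $E=E_Q^-$, and orient $v\to v_{e_k}$ together with $v_{e_j}\to (u_j)_{e_j}$ for $j\le k-1$. The resulting length-one paths give $d^+=1,\;d^-=0$ at every vertex of $V_Q$ and leave each terminal vertex with in-degree~$1$ and $S_2(H)$-degree at least $2$ (using $d_{S_2(H)}(v_{e_k})=2$ and $d_{S_2(H)}((u_j)_{e_j})\in\{2,\;1+\alpha(u_j)\}$ according as $u_j$ is a non-leaf or a leaf of $H$), so conditions~(1)--(3) all hold. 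The only real obstacle is recognising the loop/loop-free dichotomy and choosing the correct ``star-minus-an-edge'' or ``loop-triangle-diagonal'' $Q$; once $Q$ is selected, the verification is a mechanical application of the counting identities in the good-subgraph definition.
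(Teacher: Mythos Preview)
Your proof is correct and follows essentially the same route as the paper's: in both, the ``only if'' direction exhibits the identical good subgraphs of $S_2(H)$ --- the edge $v^1_ev^2_e$ when there is a loop at $v$, and the star $\{vv_{e_1},\dots,vv_{e_{k-1}}\}$ missing one arm otherwise --- and then appeals to Theorem~\ref{S2(H) is minimal DPDP-graph}. Your case split (``$H$ has a loop somewhere'' vs.\ ``$H$ is loop-free'') is phrased slightly differently from the paper's (``loop at the chosen non-leaf $v$'' vs.\ ``no loop at $v$''), and you verify the good-subgraph conditions more explicitly, but these are cosmetic differences. One small omission: you assert ``$H$ has no isolated vertex'' without handling the degenerate case $H=K_1$ (i.e., $E_H=\emptyset$), which the paper disposes of in one line; you should add this, since the corollary's hypothesis is only that $H$ is connected.
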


\begin{proof} If $E_H=\emptyset$, then $H$ consists of an isolated vertex, and
$S_2(S_2(H))= S_2(H)=H$ is not a~$D\!P\!D\!P$-graph. If $|E_H|= 1$, then $H= P_2$ (or $H=C_1$, resp.), and $S_2(S_2(H))= P_{10}$ (or $S_2(S_2(H))=C_9$, resp.) is a~minimal $D\!P\!D\!P$-graph. Assume now that $|E_H|\ge 2$. Thus, $V_H \setminus L_H \ne \emptyset$. If $v\in V_H \setminus L_H$, then $|E_H(v)|\ge 2$ and we consider two cases. Assume first that there is a~loop $e$ in $E_H(v)$. In this case  the vertices $v^1_e$, $v^2_e$, and the edge $v^1_ev^2_e$ form a~good subgraph in $S_2(H)$.  Consequently, by Theorem  \ref{S2(H) is minimal DPDP-graph},  $S_2(S_2(H))$ is not a~minimal $D\!P\!D\!P$-graph. Assume now that $E_H(v)= \{e_1, \ldots, e_k\}$ where $k\ge 2$, and no loop belongs to $E_H(v)$. Then the vertices $v, v_{e_1},\ldots, v_{e_{k-1}}$, and the edges $vv_{e_1}, vv_{e_2},\ldots, vv_{e_{k-1}}$ form a~good subgraph in $S_2(H)$. From this and from Theorem  \ref{S2(H) is minimal DPDP-graph} it again follows that  $S_2(S_2(H))$ is not a~minimal $D\!P\!D\!P$-graph.  \end{proof}

\section{$D\!P\!D\!P$-trees}

In this section we study the $D\!P\!D\!P$-trees, minimal $D\!P\!D\!P$-trees, and good subgraphs in trees. We begin with the following characterization of $D\!P\!D\!P$-trees.

\begin{proposition} \label{a-DPDP-tree} A tree $T$ is a $D\!P\!D\!P$-tree if and only if $T$ is a spanning supergraph of a $2$-subdivision graph of a forest without isolated vertices and good subgraphs.
\end{proposition}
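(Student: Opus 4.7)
The plan is to prove both implications separately; the reverse direction is essentially immediate, while the forward direction is where the real work lies.

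For the reverse direction, suppose $T$ is a spanning supergraph of $S_2(F)$ for some forest $F$ without isolated vertices and good subgraphs. Since no component of $F$ has an isolated vertex, Proposition~\ref{prop-S2(H)-jest-DPDP-grafem} applied componentwise (together with the observation that the disjoint union of $D\!P$-pairs is a $D\!P$-pair of the disjoint union) shows that $S_2(F)$ is a $D\!P\!D\!P$-graph. Observation~\ref{observ-2-supergraph} then gives that $T$ itself is a $D\!P\!D\!P$-graph, hence a $D\!P\!D\!P$-tree. Notice that the no-good-subgraph hypothesis on $F$ plays no role in this direction.

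For the forward direction, suppose $T$ is a $D\!P\!D\!P$-tree, and choose a spanning subgraph $T'\subseteq T$ that is a $D\!P\!D\!P$-graph with the fewest possible edges. Then $T'$ admits no proper spanning $D\!P\!D\!P$-subgraph, and, being a spanning subgraph of the tree $T$, it is itself a forest; let $C_1,\ldots,C_\ell$ denote its components. Restriction of a $D\!P$-pair of $T'$ to each $C_i$ yields a $D\!P$-pair of $C_i$ (every matched pair lies inside a single component, and dominating sets of $T'$ restrict to dominating sets of each $C_i$), so each $C_i$ is a $D\!P\!D\!P$-graph. Each $C_i$ is in fact a minimal $D\!P\!D\!P$-graph in the sense of this paper, because a proper spanning $D\!P\!D\!P$-subgraph of any $C_i$ could be substituted back into $T'$ to produce a proper spanning $D\!P\!D\!P$-subgraph of $T'$, contradicting the edge-minimality of $T'$. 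Theorem~\ref{S2(H) is minimal DPDP-graph} then writes each $C_i = S_2(H_i)$ for a connected graph $H_i$ having neither an isolated vertex nor a good subgraph; and since $C_i$ is a simple tree, $H_i$ must itself be a simple tree, because a loop, a pair of parallel edges, or a cycle in $H_i$ would each produce a cycle in $S_2(H_i)$. Setting $F := H_1\sqcup\cdots\sqcup H_\ell$, the definition of $S_2$ yields $T' = S_2(F)$, and thus $T$ is a spanning supergraph of $S_2(F)$ for a forest $F$ without isolated vertices.

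The main obstacle is to verify that this $F$ also inherits the property of having no good subgraph. My plan is to prove a simple componentwise decomposition lemma: if $Q$ were a good subgraph of $F$, then the auxiliary edge set $E\subseteq E_F\setminus E_Q$ and the oriented paths $\{P_x\colon x\in V_Q\}$ from the definition are paths in $F$, and paths in a disjoint union cannot cross between components. Consequently, restricting $Q$ (along with the sets $E$, $A_E$, and the family $\mathcal{P}$) to the component $H_i$ that meets $V_Q$ would yield a good subgraph of $H_i$, contradicting the fact already established that each $H_i$ has no good subgraph. Once this componentwise compatibility is in hand, the two directions combine to give the equivalence, and everything else reduces cleanly to the earlier characterizations provided by Theorem~\ref{S2(H) is minimal DPDP-graph} and Proposition~\ref{prop-S2(H)-jest-DPDP-grafem}.
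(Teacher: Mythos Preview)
Your proof is correct and follows the same approach as the paper: pass to an edge-minimal spanning $D\!P\!D\!P$-subgraph and invoke Theorem~\ref{S2(H) is minimal DPDP-graph} to write it as a $2$-subdivision. The paper's version is terser and applies the characterization directly to the minimal subgraph $R$ without explicitly decomposing into components or verifying the no-good-subgraph property for the disjoint union $F$; since the paper's definition of a minimal $D\!P\!D\!P$-graph requires connectedness, your componentwise argument is actually the more careful treatment of exactly the same idea.
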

\begin{proof} If $H$ is a forest without isolated vertices, then the forest $S_2(H)$ is a $D\!P\!D\!P$-graph (by Proposition~\ref{prop-S2(H)-jest-DPDP-grafem}) and every spanning supergraph of $S_2(H)$ is a $D\!P\!D\!P$-graph. In particular, any tree which is a spanning supergraph of $S_2(H)$ is a $D\!P\!D\!P$-tree.

Assume now that a tree $T$ is a $D\!P\!D\!P$-graph. Let $R$ be a spanning minimal $D\!P\!D\!P$-subgraph of $T$. Then $R$ is a forest and it follows from Theorems \ref{observ-2-minimal-DPDP-graphs}\,(4) and \ref{S2(H) is minimal DPDP-graph}
that $R=S_2(F)$ for some forest $F$ (without isolated vertices and good subgraphs) and therefore $T$ is a spanning supergraph of $S_2(F)$.
\end{proof}

We are interested in recognizing the structure of trees having a good subgraph. First we remark that each good subgraph in a tree is a forest. The following result shows if a tree has a good forest, then it also has a good subtree.

\begin{proposition} \label{corollary-warm-forest-in-tree} A tree has a good subgraph if and only if it has a good subtree.  \end{proposition}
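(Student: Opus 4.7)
The reverse direction is immediate, since every subtree is in particular a subgraph. For the nontrivial direction, I plan to prove the stronger statement that if $Q$ is any good subgraph of a tree $T$, then every connected component $Q_1$ of $Q$ is, on its own, a good subtree of~$T$. Because $T$ is a tree, $Q$ is a forest and $Q_1$ is a subtree of $T$, so the content is to produce edges, orientations and a path family witnessing that $Q_1$ itself satisfies conditions (1)--(3).

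The first step is to extract rigid structural information from the good-subgraph data $(E,A_E,{\cal P})$ of $Q$. The key claim is that every vertex $v\in V_Q$ is a source of $A_E$, i.e.\ $d^-_{A_E}(v)=0$. Indeed, condition~(1) gives $d^-(v)=d_T(v)-d_Q(v)-1\le d_T(v)-2$ (using $d_Q(v)\ge 1$), which is incompatible both with the in-degree $d_T(v)-1$ required by condition~(2) for an inner vertex of another path, and with $d^+(v)=0$ required for an end-vertex (since $d^+(v)=1$ by~(1)); hence $v$ appears on no $P_w$ other than $P_v$, so no arc can end at $v$. Combined with condition~(1) this forces $d_Q(v)=d_T(v)-1$, and in turn $Q=T[V_Q]$: any $T$-edge joining two vertices of $V_Q$ but not lying in $E_Q$ would sit in $E_Q^-$, be oriented in $A_E$, and produce a forbidden incoming arc. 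Consequently each component of $Q$ coincides with the induced subtree $T[V_{Q_i}]$.

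The second step is the construction. For each $v\in V_{Q_1}$ the relation $d_{Q_1}(v)=d_Q(v)=d_T(v)-1$ yields a unique edge $vu_v$ of $T$ outside $E_{Q_1}$; by the preceding step $u_v\notin V_Q$, and this edge is oriented $v\to u_v$ in $A_E$. I would set $E':=E_{Q_1}^-=\{vu_v:v\in V_{Q_1}\}$, keep the inherited orientations, and take ${\cal P}'=\{P'_v:v\in V_{Q_1}\}$ with each $P'_v$ the single arc $v\to u_v$. Condition~(1) is then immediate; condition~(2) is vacuous because the $P'_v$ have no inner vertices; and condition~(3) reduces to $d^-_{E'}(u_v)<d_T(u_v)$. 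I plan to establish this last inequality via two sub-claims: (a) $u_v$ is not a leaf of $T$, because every leaf $\ell$ of $T$ satisfies $d^-_{A_E}(\ell)=0$ (leaves are excluded from $V_Q$, since $d_Q(\ell)=d_T(\ell)-1=0$ would make them isolated in $Q$; and as end-vertices condition~(3) would force $d^-(\ell)<1$), while $u_v$ receives the arc $v\to u_v$; and (b) in the tree $T$ a vertex outside $V_{Q_1}$ has at most one neighbor in $V_{Q_1}$, for otherwise the unique $T$-path between two such neighbors would leave $V_{Q_1}$, contradicting connectedness of $Q_1=T[V_{Q_1}]$. Together these yield $d^-_{E'}(u_v)\le 1<2\le d_T(u_v)$.

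The main obstacle is the first structural step---establishing $d^-_{A_E}(v)=0$ for all $v\in V_Q$. Once this rigidity is in place, the identification $Q=T[V_Q]$ and the existence of the unique outward edge $vu_v$ follow at once, and the single-arc path construction above is then routine to verify.
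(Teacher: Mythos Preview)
Your first structural step has a genuine error. You assert that condition~(3) forces $d^+(x)=0$ for an end vertex $x$, but condition~(3) says only $d^-_H(x)<d_H(x)$ and places no constraint on the out-degree. Nothing in the definition prevents a vertex $v\in V_Q$ from being simultaneously the initial vertex of $P_v$ and the terminal vertex of some other $P_w$: conditions~(1) and~(3) are compatible whenever $d^-(v)=d_T(v)-d_Q(v)-1\ge 1$, that is, whenever $d_Q(v)\le d_T(v)-2$. So your conclusion $d^-_{A_E}(v)=0$ for all $v\in V_Q$, and with it the identity $d_Q(v)=d_T(v)-1$ on which your single-arc construction rests, fails in general.

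A concrete counterexample to your stronger claim that every component is a good subtree: let $T$ be the path $a''\,a'\,a\,b\,x\,c\,d\,d'\,d''$ with an additional pendant path $c\,y\,y'$ attached at $c$. Take $Q=Q_1\cup Q_2$ where $Q_1$ is the edge $ab$ and $Q_2$ the edge $cd$, with paths $P_a=(a\to a')$, $P_b=(b\to x\to c)$, $P_c=(c\to y)$, $P_d=(d\to d')$. Then $Q$ is a good subgraph of $T$ and $c\in V_{Q_2}$ has $d^-(c)=1$. The component $Q_2$ is \emph{not} a good subtree of $T$: condition~(1) for $c$ forces one of $cx,cy$ to be oriented into $c$, and in the tree $T$ that incoming arc cannot lie on any path starting at $c$ or at $d$. (The component $Q_1$ \emph{is} a good subtree.)

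The paper does not claim that every component works. It argues that, because $H$ is a tree, some component $Q_{i_0}$ has no path of $\bigcup_{j\ne i_0}\mathcal{P}_j$ terminating at it, and the restricted family $\mathcal{P}_{i_0}$ then witnesses that $Q_{i_0}$ is good. Your approach is salvageable along these lines---for such a $Q_{i_0}$ one really does get $d^-(v)=0$ for all $v\in V_{Q_{i_0}}$, and your single-arc construction then goes through---but the selection of $Q_{i_0}$ is exactly the missing ingredient.
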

\begin{proof} Assume that a forest $Q$ is a good subgraph in a tree $H$. Let $Q_1,\dots,Q_k$ ($k\ge 2$) be the components of $Q$. It suffices to prove that one of the components $Q_1,\dots,Q_k$ is a good subgraph in $H$. Let  $\mathcal{P} = \{P_v \colon v \in V_Q\}$ be a family of oriented paths indexed by the vertices of $Q$ and having the properties (1)--(3) stated in the definition of a~good subgraph (for some subset $E$, where $E_Q^-\subseteq E \subseteq E_H \setminus E_Q$, and some orientation $A_E$ of the edges in $E$). Let $\mathcal{P}_i$ denote the family $\{P_v \colon v \in V_{Q_i}\}$ where $i \in [k]$. From the properties of ${\cal P}$ and from the fact that $H$ is a tree it follows that ${\cal P}_i$ is a family of vertex-disjoint paths, each vertex of $Q_i$ is the initial vertex of exactly one path belonging to ${\cal P}_i$, and no path $P_v\in {\cal P}_i$ terminates at a~vertex of $Q_i$ or at a leaf of $H$. (Although, this time a path belonging to ${\cal P}_i$ can terminate at a vertex belonging to $Q_j$ or to a path in ${\cal P}_j$, $j\ne  i$.) However, from the same facts it follows that there exists a subtree $Q_{i_0}\in \{Q_1,\dots,Q_k\}$ such that no
path $P_v\in \bigcup_{j\ne i_0} {\cal P}_j$ terminates at $Q_{i_0}$.%
Now $Q_{i_0}$ is a~good subtree in $H$ as the family ${\cal P}_{i_0}$ has the properties (1)--(3) stated in the definition of a good subgraph (for the partially ordered graph $H[A_{E_{i_0}}]$, where $A_{E_{i_0}}$ is the set of arcs belonging to $A_E$ and covered by the paths of ${\cal P}_{i_0}$, see $Q_2$ or $Q_5$ in Fig.~\ref{good forest-graphs}).
\end{proof}

\begin{figure}[h!]\begin{center}  \special{em:linewidth 0.4pt} \unitlength 0.5ex \linethickness{0.4pt} \begin{picture}(120,45)\put(5,40){${}_{H}$}
\put(13,5){\put(0,0){\Thicklines\path(0,0)(10,0)\put(0,0){\circle*{2}}\put(10,0){\circle*{2}}}
\put(30,0){\Thicklines\path(0,0)(10,0)\put(0,0){\circle*{2}}\put(10,0){\circle*{2}}}
\put(50,0){\Thicklines\path(0,0)(10,0)\put(0,0){\circle*{2}}\put(10,0){\circle*{2}}}
\put(70,0){\Thicklines\path(0,0)(10,0)\put(0,0){\circle*{2}}\put(10,0){\circle*{2}}}
\put(0,20){\Thicklines\path(0,0)(10,0)\put(0,0){\circle*{2}}\put(10,0){\circle*{2}}}
\put(10,20){\Thicklines\path(0,0)(10,0)\put(0,0){\circle*{2}}\put(10,0){\circle*{2}}}
\put(70,0){\Thicklines\path(0,0)(0,10)\put(0,0){\circle*{2}}\put(10,0){\circle*{2}}}\path(0,0)(0,40)
\put(80,0){\Thicklines\path(0,0)(0,10)\put(0,0){\circle*{2}}\put(10,0){\circle*{2}}}\path(0,0)(100,0)
\put(0,0){\path(0,0)(0,20)\put(0,10){\circle*{2}}\put(0,20){\circle*{2}}\put(0,7){\vector(0,1){0.0}}}
\put(0,20){\path(0,0)(0,20)\put(0,10){\circle*{2}}\put(0,20){\circle*{2}}\put(0,7){\vector(0,1){0.0}}}
\put(10,20){\path(0,0)(0,20)\put(0,10){\circle*{2}}\put(0,20){\circle*{2}}\put(0,7){\vector(0,1){0.0}}}
\put(20,20){\path(0,0)(0,20)\put(0,10){\circle*{2}}\put(0,20){\circle*{2}}\put(0,7){\vector(0,1){0.0}}}
\put(30,0){\path(0,0)(0,20)\put(0,0){\circle*{2}}\put(0,10){\circle*{2}}\put(0,20){\circle*{2}}
\put(0,7){\vector(0,1){0.0}}}\put(40,0){\path(0,0)(0,20)\put(0,0){\circle*{2}}\put(0,10){\circle*{2}}
\put(0,20){\circle*{2}}\put(0,7){\vector(0,1){0.0}}}\put(60,0){\path(0,0)(0,20)\put(0,0){\circle*{2}}
\put(0,10){\circle*{2}}\put(0,20){\circle*{2}}\put(0,7){\vector(0,1){0.0}}}\put(70,10){\path(0,0)(0,20)
\put(0,0){\circle*{2}}\put(0,10){\circle*{2}}\put(0,20){\circle*{2}}\put(0,7){\vector(0,1){0.0}}}
\put(80,10){\path(0,0)(0,20)\put(0,0){\circle*{2}}\put(0,10){\circle*{2}}\put(0,20){\circle*{2}}
\put(0,7){\vector(0,1){0.0}}}\put(100,0){\circle*{2}}\put(20,0){\circle*{2}}\put(50,0){\circle*{2}}
\put(0,17){\vector(0,1){0.0}}\put(16,0){\vector(1,0){0.0}}\put(26,0){\vector(1,0){0.0}}\put(86,0){\vector(1,0){0.0}}
\put(44,0){\vector(-1,0){0.0}}\put(64,0){\vector(-1,0){0.0}}\put(5,0){\ellipse{16}{6}}
\put(35,0){\ellipse{16}{6}}\put(55,0){\ellipse{16}{6}}\put(10,20){\ellipse{26}{6}}\put(75,5){\ellipse{18}{18}}
\put(-5,16){${}_{Q_1}$}\put(-3,-5){${}_{Q_2}$}\put(27,-5){${}_{Q_3}$}\put(47,-5){${}_{Q_4}$}\put(80,-5){${}_{Q_5}$}}%
\end{picture} \caption{A good forest in a tree} \label{good forest-graphs}\end{center}\end{figure}
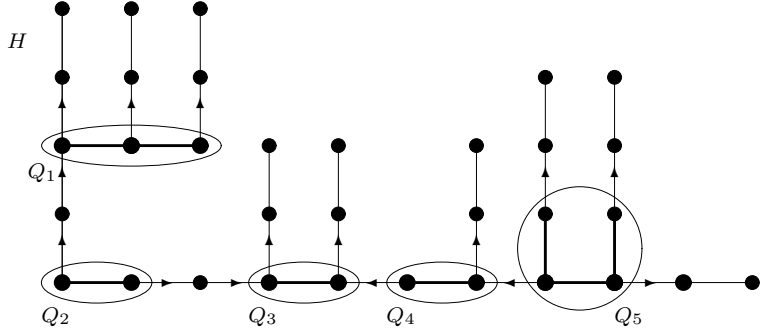

We observe that every tree can be a good subtree in a tree. The following result describes the place of a good subtree in a tree and connections between this good subtree and the rest of the tree.

\begin{proposition} \label{observation-warm-tree-in-tree} A tree $Q$ is a good subgraph in a tree $H$ if and only if no leaf of $H$ is a neighbor of $Q$ and the subgraph of $H$ induced by the set $N_H[V_Q]$ is a corona graph, that is, if and only if $N_H[V_Q] \cap L_H= \emptyset$ and $H[N_H[V_Q]]=Q \circ K_1$.
\end{proposition}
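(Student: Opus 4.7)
My plan is to prove the two implications separately, exploiting throughout that $H$ is a tree (so the path between any two vertices is unique) and that any connected subgraph of a tree is automatically an induced subtree.

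For the forward direction, assume $Q$ is a good subtree of $H$ with associated arc set $A_E$ and family $\mathcal{P} = \{P_v : v \in V_Q\}$. Observation~\ref{observation-leaves-in-paskudny-subgraphs} already rules out leaves and supports of $H$ inside $V_Q$, so no leaf of $H$ can be adjacent to $V_Q$; this gives $N_H[V_Q] \cap L_H = \emptyset$. To establish $H[N_H[V_Q]] = Q \circ K_1$, I would first observe that $Q$ is an induced subtree of $H$, since any edge of $H$ with both ends in $V_Q$ already lies on the unique $Q$-path between those ends. Suppose now, for contradiction, that some $v \in V_Q$ has two neighbors $u_1, u_2$ outside $V_Q$. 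Both edges $vu_1, vu_2$ lie in $E_Q^- \subseteq E$ and are oriented; since $d_H^+(v) = 1$, exactly one of them (say $(v,u_1)$) is the first arc of $P_v$, and the other yields an incoming arc $(u_2, v)$. This incoming arc belongs to some $P_w$; because $v$'s lone outgoing arc is already used by $P_v$, the vertex $v$ must be the end of $P_w$ and $w \neq v$. But then $P_w$ is an oriented path in $H$ from $w$ to $v$ whose arcs correspond to edges in $E_H \setminus E_Q$, whereas the unique $H$-path between $w,v\in V_Q$ lies in the induced subtree $Q$ and so uses only edges of $E_Q$. This contradiction forces $d_H(v) = d_Q(v) + 1$ for every $v \in V_Q$, producing a unique outside neighbor $v'$. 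Both the injectivity of $v \mapsto v'$ and the absence of further edges in $H[N_H[V_Q]]$ then follow from the tree property: a coincidence $v' = u'$, or an extra edge among the $v'$, would together with the $Q$-path between the two relevant vertices of $V_Q$ yield a cycle in $H$.

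For the backward direction, assume $N_H[V_Q] \cap L_H = \emptyset$ and $H[N_H[V_Q]] = Q \circ K_1$. The corona structure supplies, for each $v \in V_Q$, a unique neighbor $v' \notin V_Q$, and the map $v \mapsto v'$ is injective. Setting $E = E_Q^- = \{vv' : v \in V_Q\}$ and orienting each edge as $(v, v')$ produces a family of single-arc paths $P_v$. Condition~(1) of the good-subgraph definition holds because $d_H^+(v) = 1$ and $d_H^-(v) = 0 = d_H(v) - d_Q(v) - 1$, using the corona identity $d_H(v) = d_Q(v) + 1$. Condition~(2) is vacuous as the paths have no inner vertex, and~(3) holds because the only arc entering $v'$ is $(v, v')$ while $d_H(v') \geq 2$ (as $v' \notin L_H$).

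The main obstacle is the forward implication, specifically the step showing that each vertex of $Q$ has exactly one neighbor outside $Q$. The crucial leverage there is the observation that, in a tree, any oriented path $P_w$ linking two vertices of $V_Q$ is forbidden: its underlying undirected path is the unique $H$-path between its endpoints, which by the induced-subtree property lies entirely inside $Q$, contradicting the requirement that every arc of $P_w$ correspond to an edge in $E_H \setminus E_Q$.
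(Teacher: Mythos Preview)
Your proof is correct and follows essentially the same strategy as the paper's. The backward direction is identical (single-arc paths $(v,v')$ from the corona structure). For the forward direction, the paper asserts---without spelling out the argument---that in a tree the paths of $\mathcal{P}$ are vertex-disjoint and none terminates in $V_Q$, and then reads off $d_H(v)=d_Q(v)+1$ from property~(1). You instead argue directly that a second outside neighbor of $v$ would force some $P_w$ with $w\in V_Q\setminus\{v\}$ to end at $v$, whose underlying edges (all in $E_H\setminus E_Q$) would have to coincide with the unique $w$--$v$ path in $H$ lying inside $Q$. This is the same tree-uniqueness observation the paper is implicitly invoking; your version simply makes the contradiction explicit rather than packaging it as vertex-disjointness.
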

\begin{proof} Let $Q$ be a good subgraph of $H$ and let $\mathcal{P} = \{P_v \colon v \in V_Q\}$ be a family of oriented paths indexed by the vertices of $Q$ and having the properties (1)--(3) stated in the definition of a good subgraph (for some subset $E$, where $E_Q^-\subseteq E \subseteq E_H \setminus E_Q$, and some orientation $A_E$ of the edges in $E$). From these properties and from the fact that $H$ is a tree it follows that ${\cal P}$ is a family of vertex-disjoint paths, each vertex of $Q$ is the initial vertex of exactly one path belonging to ${\cal P}$, and no path $P_v\in {\cal P}$ terminates at a vertex of $Q$ or at a leaf of $H$. This proves that $N_H[V_Q] \cap L_H= \emptyset$. (The same follows directly from Observation \ref{observation-leaves-in-paskudny-subgraphs}). In addition, every vertex $v$ of $Q$ is adjacent to exactly one vertex in $V_H \setminus V_Q$, say $s_v$, which is the terminal vertex of the first arc in $P_v$. Since $H$ is a tree, the set $\{s_v\colon v\in V_Q\}$ is independent and, consequently, the subgraph of $H$ induced by $V_Q\cup \{s_v\colon v\in V_Q\}$ ($=V_H[V_Q]$) is a~corona graph isomorphic to $Q \circ K_1$.

Now assume that $Q$ is a subtree of $H$ such that $N_H[V_Q] \cap L_H= \emptyset$ and $H[N_H[V_Q]]= Q \circ K_1$. For a vertex $v$ of $Q$, let $v_\ell$ denote the only vertex in $N_H(v) \setminus V_Q$. Since the edge set $E=\{vv_\ell \colon v\in V_Q\}$, the arc set $A_E=\{(v,v_\ell) \colon v\in V_Q\}$, and the family of oriented paths ${\cal P}=A_E$ have properties (1)--(3) of the definition of a good subgraph, we note that $Q$ is a good subgraph in $H$. 	
\end{proof}

\begin{cor} \label{ostatni-o-drzezwach} If $H$ is a tree of order at least two, then $S_2(H)$ is a $D\!P\!D\!P$-tree. In addition, the $D\!P\!D\!P$-tree $S_2(H)$ is not a minimal $D\!P\!D\!P$-tree if and only if there is a~tree $Q$ in $H-(L_H\cup S_H)$ such that $Q\circ K_1$ is a subtree in $H-L_H$ and $d_H(x)=d_Q(x)+1$ for each vertex $x$ of $Q$. \end{cor}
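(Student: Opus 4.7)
The first assertion is immediate: since $H$ is a tree of order at least two, $H$ has no isolated vertex, so Proposition~\ref{prop-S2(H)-jest-DPDP-grafem} gives that $S_2(H)$ is a $D\!P\!D\!P$-graph; and $S_2(H)$ is a tree because it is obtained from $H$ by inserting two internal vertices into each edge and blowing up each pendant edge into a star. So I would dispatch part one in a single sentence and devote the proof to the characterization of non-minimality.

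For the second assertion, my plan is to chain together the three results proved in the preceding pages. By Theorem~\ref{S2(H) is minimal DPDP-graph}, $S_2(H)$ is a minimal $D\!P\!D\!P$-tree if and only if $H$ has neither an isolated vertex nor a good subgraph; since $|V_H| \ge 2$ and $H$ is connected, $H$ has no isolated vertex, so $S_2(H)$ fails to be minimal precisely when $H$ has a good subgraph. Proposition~\ref{corollary-warm-forest-in-tree} then lets me replace ``good subgraph'' by ``good subtree'', and Proposition~\ref{observation-warm-tree-in-tree} rewrites ``good subtree $Q$ of $H$'' as ``$N_H[V_Q] \cap L_H = \emptyset$ and $H[N_H[V_Q]] = Q \circ K_1$''. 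So the whole task reduces to showing that this pair of conditions is equivalent to the statement in the corollary.

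For the forward direction, suppose $Q$ is a good subtree. From $N_H[V_Q] \cap L_H = \emptyset$ I would first conclude that $V_Q$ contains no leaf of $H$ and no support vertex of $H$ (a support has a leaf neighbor, which would lie in $N_H[V_Q]\cap L_H$), so $Q$ sits inside $H-(L_H\cup S_H)$. The identity $H[N_H[V_Q]] = Q \circ K_1$ exhibits $Q\circ K_1$ as an induced subtree of $H$, whose vertex set $N_H[V_Q]$ is disjoint from $L_H$, so $Q\circ K_1$ is a subtree of $H-L_H$. Finally, in $Q\circ K_1$ each $x\in V_Q$ has degree $d_Q(x)+1$; since all of $x$'s neighbors in $H$ lie in $N_H[V_Q]$, we get $d_H(x)=d_Q(x)+1$.

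For the reverse direction, let $Q$ satisfy the conditions of the corollary and let $W$ be the set of pendant vertices adjoined to $V_Q$ in the copy of $Q\circ K_1$ sitting inside $H-L_H$. Then $V_Q\cap L_H=\emptyset$ because $V_Q \subseteq V_H\setminus(L_H\cup S_H)$, and $W\cap L_H=\emptyset$ because $Q\circ K_1\subseteq H-L_H$. The degree identity $d_H(x)=d_Q(x)+1$ for $x\in V_Q$ forces every neighbor of $x$ in $H$ to lie in $V_Q\cup W$, hence $N_H[V_Q]=V_Q\cup W$ and this set avoids $L_H$. Here I would invoke the fact that in a tree any subtree is an induced subtree (an extra chord would create a cycle), giving $H[N_H[V_Q]] = Q\circ K_1$. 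Thus Proposition~\ref{observation-warm-tree-in-tree} certifies $Q$ as a good subtree of $H$, completing the equivalence.

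The only subtle point I anticipate is the last step where I claim $H[V(Q\circ K_1)]=Q\circ K_1$, i.e.\ that there are no extra chords among $V_Q\cup W$ beyond those already present in $Q\circ K_1$; the cleanest justification uses the fact that $H$ is a tree, so the induced subgraph on any vertex set of a subtree is the subtree itself. Everything else is bookkeeping between the degree identity and the set equality $N_H[V_Q]=V_Q\cup W$.
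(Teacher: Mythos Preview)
Your proposal is correct and follows exactly the route the paper intends: the corollary is stated without proof precisely because it is meant to be read off from Theorem~\ref{S2(H) is minimal DPDP-graph}, Proposition~\ref{corollary-warm-forest-in-tree}, and Proposition~\ref{observation-warm-tree-in-tree} in the way you describe. Your translation between the conditions ``$N_H[V_Q]\cap L_H=\emptyset$ and $H[N_H[V_Q]]=Q\circ K_1$'' and the conditions in the corollary is accurate, including the observation that in a tree every subtree is induced.
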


\section{Open problems}

We close this paper with the following list of open problems that we have yet to settle.
\begin{enumerate}
\item How difficult is it to recognize graphs having good subgraphs?
\item How difficult is it to recognize whether a given graph is a good subgraph in
a~graph?
\item How difficult is it to recognize whether a given tree has good subtree?
\item Provide an algorithm for the problem determining a good subgraph of a graph.
\item Since every graph without isolated vertices is homeomorphic to
a $D\!P\!D\!P$-graph, it would be interesting to find the smallest number of subdivisions of edges of a graph in order to obtain a $D\!P\!D\!P$-graph.
\end{enumerate}

\medskip


\begin{thebibliography}{99}

\bibitem{AK} V. Anusuya, R. Kala, A note on disjoint dominating sets in graphs, \emph{Int. J. Contemp. Math. Sci.} 7 (2012) 2099--2110.

\bibitem{BDGHHU} I. Broere, M. Dorfling, W. Goddard, J.H. Hattingh, M.A. Henning, E. Ungerer, Augmenting trees to have two disjoint total dominating sets, \emph{Bull. Inst. Combin. Appl.} 42 (2004) 12--18.

\bibitem{ChLZ16} G.~Chartrand, L.~Lesniak, P.~Zhang,
\emph{Graphs and Digraphs}. CRC Press, Boca Raton, 2016.

\bibitem{DDH} P. Delgado, W.J. Desormeaux, T.W. Haynes, Partitioning the vertices of a~graph into two total dominating sets, \emph{Quaest. Math.} 39 (2016) 863--873. 

\bibitem{DHH17} W.J.~Desormeaux, T.W.~Haynes, M.A.~Henning,
Partitioning the vertices of a~cubic graph into two total dominating sets,
\emph{Discrete Appl. Math.} 223 (2017) 52--63. 

\bibitem{DGHH} M. Dorfling, W. Goddard, J.H. Hattingh, M.A. Henning, Augmenting a graph of minimum degree 2 to have two disjoint total dominating sets, \emph{Discrete Math.} 300 (2005) 82–90. 

\bibitem{HaynesHenning2005} T.W. Haynes, M.A. Henning, Trees with two disjoint minimum independent dominating sets, \emph{Discrete Math.} 304 (2005) 69--78.

\bibitem{HHLMS} S.M. Hedetniemi, S.T. Hedetniemi, R.C. Laskar, L. Markus, P.J. Slater, Disjoint dominating sets in graphs, Proc. ICDM 2006, Ramanujan Mathematics Society \emph{Lect. Notes Ser.} 7 (2008) 87--100.

\bibitem{HT} P. Heggernes, J.A. Telle, Partitioning graphs into generalized dominating sets, \emph{Nordic J. Comput.} 5 (1988) 128--142.

\bibitem{HLR09} M.A.~Henning, Ch.~L\"owenstein, D.~Rautenbach, Remarks about disjoint dominating sets, \emph{Discrete Math.} 309 (2009) 6451--6458. 

\bibitem{HLR10+} M.A. Henning, C. L\"owenstein, D. Rautenbach, Partitioning a graph into a dominating set, a total dominating set, and something else, \emph{Discuss. Math. Graph Theory} 30 (2010) 563--574. 

\bibitem{HLR10} M.A. Henning, C. L\"owenstein, D. Rautenbach, An independent dominating set in the complement of a minimum dominating set of a tree, \emph{Appl. Math. Lett.} 23 (2010) 79--81. 

\bibitem{HLR10++} M.A. Henning, C. L\"owenstein, D. Rautenbach, J. Southey,  Disjoint dominating and total dominating sets in graphs, \emph{Discrete Appl. Math.} 158 (2010) 1615--1623.

\bibitem{HM18} M.A.~Henning, A.J.~Marcon,
Semitotal domination in graphs: Partition and algorithmic results,
\emph{Util. Math.} 106 (2018) 165--184.

\bibitem{HR13} M.~A.~Henning, D.~F.~Rall,
On graphs with disjoint dominating and $2$-dominating sets,
\emph{Discuss. Math. Graph Theory} 33  (2013) 139--146. 

\bibitem{HS08} M.A. Henning, J. Southey, A note on graphs with disjoint dominating and total dominating sets, \emph{Ars Combin.} 89 (2008) 159--162.

\bibitem{HS09} M.A. Henning, J. Southey, A characterization of graphs with disjoint dominating and total dominating sets, \emph{Quaest. Math.} 32 (2009) 119--129.

\bibitem{HY} M.A. Henning, A. Yeo, \emph{Total Domination in Graphs}, Springer Monographs
in Mathematics, Springer,  2013. 

\bibitem{KJ} E.M. Kiunisala, F.P. Jamil, On pairs of disjoint dominating sets in a graph, \emph{Int. J. Math. Anal.} 10 (2016) 623--637. 


\bibitem{KS} V.R. Kulli, S.C. Sigarkanti, Inverse domination in graphs, \emph{Nat. Acad. Sci. Lett.} 14 (1991) 473--475.

\bibitem{LR10} C. Lowenstein, D. Rautenbach, Pairs of disjoint dominating sets and
the minimum degree of graphs, \emph{Graphs Combin.} 26 (2010) 407--424. 

\bibitem{MTZ19} M. Miotk, J. Topp, P. Żyliński, Disjoint dominating and $2$-dominating sets in graphs, arXiv:1903.06129v1.

\bibitem{Ore} O. Ore, \emph{Theory of Graphs},   Amer. Math. Soc. Colloq. Publ. 38, Amer. Math. Soc., Providence, RI, 1962.

\bibitem{SH10a} J.~Southey, M.A.~Henning, Graphs with disjoint dominating and paired-dominating sets, \emph{Cent. Eur. J. Math.} 8 (2010) 459--467. 

\bibitem{SH11a} J.~Southey, M.A.~Henning, Dominating and total dominating partitions in cubic graphs, \emph{Cent. Eur. J. Math.} 9 (2011) 699--708. 

\bibitem{SH11b} J.~Southey, M.A.~Henning, A characterization of graphs with disjoint dominating and paired-dominating sets, \emph{J. Comb. Optim.} 22 (2011) 217--234.

\end{thebibliography}
\end{document}